\newtheorem{Def}{Definition}[section]
\newtheorem{lem}[Def]{Lemma}
\newtheorem{theo}[Def]{Theorem}
\newtheorem{pro}[Def]{Proposition}
\newtheorem{rem}[Def]{Remark}
\newtheorem{assum}{Assumption}
\definecolor{Green}{RGB}{0,128,0}
\newcommand{\LL}{\langle}
\newcommand{\RR}{\rangle}
\newcommand{\mcal}{\mathcal}
\newcommand{\mscr}{\mathscr}
\newcommand{\mbb}{\mathbb}
\newcommand{\mbf}{\mathbf}
\newcommand{\ud}{\mathrm d}
\newcommand{\PD}{\partial}
\numberwithin{equation}{section}
\allowdisplaybreaks \allowdisplaybreaks[4]
\begin{document}

\title[CLT]{Central limit theorem for temporal average of backward Euler--Maruyama method}

\author{Diancong Jin}
\address{School of Mathematics and Statistics, Huazhong University of Science and Technology, Wuhan 430074, China;
	Hubei Key Laboratory of Engineering Modeling and Scientific Computing, Huazhong University of Science and Technology, Wuhan 430074, China}
\email{jindc@hust.edu.cn}

\thanks{This work is supported by National Natural Science Foundation of China (No. 12201228), and the Fundamental Research Funds for the Central Universities 3004011142.
}

\keywords{
central limit theorem, temporal average, ergodicity, backward Euler--Maruyama method}

\begin{abstract}
This work focuses on the temporal average  of the backward  Euler--Maruyama (BEM) method, which is used to approximate the ergodic limit of  stochastic ordinary differential equations with super-linearly growing drift coefficients. 
We give the central limit theorem (CLT) of the  temporal average, which characterizes the asymptotics in distribution of the temporal average. When the deviation order is smaller than the optimal strong order, we directly derive the CLT of the temporal average through that of  original equations and the uniform strong order of the BEM method. For the case that the deviation order equals to the optimal strong order, the CLT is established via the Poisson equation associated with the generator of original equations. Numerical experiments are  performed to illustrate the theoretical results.

\end{abstract}

\maketitle

\textit{AMS subject classifications}: 60H35, 60F05, 60H10

\section{Introduction}
Ergodic theory is a powerful tool to investigate the long-time dynamics and 
statistical properties of stochastic systems, which is widely applied in physics, biology, chemistry and so on(see, e.g., \cite{ergodic1982,Wangxu,ergodic2015,ergodic2016}). A crucial problem in ergodic theory is to determine the ergodic measure and ergodic limit. Since  explicit expressions of them are generally  unavailable, one usually resorts to numerical methods to obtain their  approximations. There have been lots of numerical methods  which inherit the ergodicity or approximate the ergodic limit of original systems; see \cite{Vilmart14,HSW17,HWZ17,Mattingly10,Pardoux05,Talay1984} and references therein. In the aforementioned work, main efforts are made to analyze the error between the numerical
invariant measure and the original one, and that between numerical temporal average and the ergodic limit. 

Besides the convergence of the numerical temporal average in the moment sense, the asymptotics of its distribution is also an essential property. In recent several work, the  central limit theorem (CLT) of the  temporal average of some numerical methods is given, which characterizes the fluctuation of the numerical temporal average around  ergodic limits of original systems in the sense of distribution. In \cite{CLT2012}, the CLT of the temporal average of the Euler-Maruyama (EM) method with decreasing step-size for  ergodic stochastic ordinary differential equations (SODEs) is given. In addition, \cite{Xulihu22} proves the CLT and moderate deviation of the EM method with a fixed step-size for SODEs. For a class of ergodic stochastic partial differential equations (SPDEs), \cite{Dang23} shows that the temporal average of a full discretization with fixed temporal and spatial step-sizes satisfies the CLT. In the existing work, the CLT of numerical temporal average is established  provided that coefficients of original equations are Lipschitz continuous. This motivates us to investigate  the CLT of numerical temporal average for stochastic systems with non-Lipschitz coefficients, which have more extensive applications in reality compared with the Lipschitz case.

In this work, we consider the following SODE
\begin{align}\label{SDE}
		\ud X(t)=b(X(t))\ud t+\sigma(X(t))\ud W(t),~t>0,	
\end{align}
where $\{W(t),t\ge 0\}$ is a $D$-dimensional standard Brownian motion defined on a complete filtered probability space $\left(\Omega,\mcal F,\{\mcal F_t\}_{t\ge0},\mbf P\right)$, and $b:\mbb R^d\to\mbb R^d$ and $\sigma:\mbb R^d\to\mbb R^{d\times D}$ satisfy Assumptions \ref{assum1}-\ref{assum2} such that \eqref{SDE} admits a unique strong solution on $[0,+\infty)$ for any given deterministic initial value $X(0)\in\mbb R^d$. Notice that our assumptions allow $b$ to grow super-linearly.  It is shown in  \cite[Theorem 3.1]{Liuwei23} that \eqref{SDE} admits a unique invariant measure $\pi$ and is thus ergodic, due to the strong dissipation condition on $b$. In order to inherit the ergodicity of \eqref{SDE} and approximate the ergodic limit $\pi(h):=\int_{\mbb R^d}h(x)\pi(\ud x)$, $h\in\mbf C_b(\mbb R^d)$, \cite{Liuwei23} discretizes \eqref{SDE} by the backward Euler--Maruyama (BEM) method (see \eqref{BEM}), and gives the error between the numerical invariant measure $\pi_\tau$ and $\pi$ with $\tau$ being the step-size. The above result together with the strong order of the BEM method in the infinite time horizon implies that the temporal average $\frac{1}{N}\sum_{k=0}^Nh(\bar{X}_k^x)$ converges to the ergodic limit $\pi(h)$, i.e., 
\begin{gather}\label{Eq13}
	\lim_{\tau\to0}\lim_{N\to+\infty} \Big|\frac{1}{N}\sum_{k=0}^N\mbf Eh(\bar{X}_k^x)-\pi(h)\Big|=0,
\end{gather}
where $\{\bar{X}^x_n\}_{n\ge 0}$ is the numerical solution generated by the BEM method with initial value $x\in\mbb R^d$. 

The purpose of this paper is to present the CLT for the following temporal average 
\begin{align*}
	\Pi_{\tau,\alpha}(h)=\frac{1}{\tau^{-\alpha}}\sum_{k=0}^{\tau^{-\alpha}-1}h(\bar{X}^x_k), \quad\alpha\in(1,2],~h\in \mbf C^4_b(\mbb R^d),
\end{align*}
where for convenience we always assume that $\tau^{-\alpha}$ is an integer instead of the step number $N$ in \eqref{Eq13}. More precisely, we prove in Theorems \ref{CLT1} and \ref{CLT2}  that the normalized temporal average $\frac{1}{\tau^{\frac{\alpha-1}{2}}}(\Pi_{\tau,\alpha}(h)-\pi(h))$ converges to the normal distribution $\mcal N(0,\pi(|\sigma^\top\nabla \varphi|^2))$ in distribution as $\tau\to 0$, respectively for $\alpha\in(1,2)$ and $\alpha=2$. In fact, Theorem \ref{CLT1} indicates that the CLT holds for the temporal average of a class of numerical methods with uniform strong order $\frac{1}{2}$, for $\alpha\in(1,2)$.  
Here,  $\varphi$ is defined by \eqref{eq10} and solves the Poisson equation $\mcal L\varphi=h-\pi(h)$ (see Lemma \ref{varphi}), with $\mcal L$ being the generator of \eqref{SDE}. We call the  parameter $\tau^{\frac{\alpha-1}{2}}$ the deviation scale and $\frac{\alpha-1}{2}$ the deviation order; see Remark \ref{rem1} for the reason of requiring $\alpha>1$.

The proof ideas of the CLT for $\Pi_{\tau,\alpha}(h)$ are different for $\alpha\in(1,2)$ and $\alpha=2$.  For the case $\alpha\in(1,2)$,  we directly derive the CLT  for $\Pi_{\tau,\alpha}(h)$ in Theorem \ref{CLT1}, by means of the CLT for \eqref{SDE} and the optimal strong order in the infinite time horizon of the BEM method, considering that the CLT for \eqref{SDE} is a classical result (see \cite[Theorem 2.1]{FCLT82}. The key of this proof lies in that the deviation order $\frac{\alpha-1}{2}$ is smaller than the optimal strong order $\frac12$ for $\alpha\in(1,2)$, which  does not apply to the case $\alpha=2$.  In order to tackle the  more subtle case $\alpha=2$, we follow the argument in \cite{Xulihu22} and \cite{Dang23} to obtain the CLT for $\Pi_{\tau,2}(h)$. The main idea is to reformulate the normalized temporal average $\frac{1}{\tau^{\frac{\alpha-1}{2}}}(\Pi_{\tau,\alpha}(h)-\pi(h))$ by means of the Poisson  equation. This allows us to decompose   $\frac{1}{\tau^{\frac{\alpha-1}{2}}}(\Pi_{\tau,\alpha}(h)-\pi(h))$ as a martingale difference series sum converging to $\mcal N(0,\pi(|\sigma^\top\nabla \varphi|^2))$ in distribution, and a negligible remainder converging to $0$ in probability. In this proof, the $p$th ($p>2$) moment boundedness of the BEM method in the infinite time horizon and the regularity of the solution to the Poisson equation  play important roles, where the former has not been reported for SODEs with non-Lipschitz coefficients to the best our knowledge. 

To sum up, the contributions of this work are twofold. Firstly, we give the CLT for the temporal average of the BEM method, which generalizes the existing results to SODEs with super-linearly growing drift coefficients. Secondly, we prove the   $p$th ($p>2$) moment boundedness of the BEM method in the infinite time horizon for the original equation. The rest of this paper is organized as follows. In Section \ref{Sec2}, we give our assumptions and recall some basic properties of the exact solution. Section \ref{Sec3} presents our main results and proves the CLT for $\Pi_{\tau,\alpha}(h)$ with $\alpha\in(1,2)$, and Section \ref{Sec4} gives the proof of the CLT for  $\Pi_{\tau,2}(h)$. Some numerical tests are displayed to illustrate the theoretical results in Section \ref{Sec5}. Finally, we give the conclusions and future aspects in Section \ref{Sec6}.

\section{Preliminaries}\label{Sec2}
In this section, we give our main assumptions on the coefficients of \eqref{SDE} and present some basic properties for \eqref{SDE}. We begin with some notations.  Denote by $|\cdot|$  the $2$-norm of a vector or matrix, and by $\LL\cdot,\cdot\RR$ the scalar product of two vectors. Let $d,m,k\in\mbb N^+$ with $\mbb N^+$ denoting the set of positive integers. For matrix $A,B\in\mbb R^{d\times m}$, denote $\LL A,B\RR_{HS}:=\sum\limits_{i=1}^d\sum\limits_{j=1}^mA_{ij}B_{ij}$ and $\|A\|_{HS}:=\sqrt{\LL A,A\RR_{HS}}$. Let $\mscr B(\mbb R^d)$ stand for the set of all Borel sets of $\mbb R^d$. Denote by  $\mcal P(\mbb R^d)$ the space of all probability  measures on $\mbb R^d$. Denote $\mu(f)=\int_{\mbb R^d}f(x)\mu(\ud x)$ for $\mu\in\mcal P(\mbb R^d)$ and $\mu$-measurable function $f$.
For convenience, we set $\mcal F_t=\sigma(W(s),0\le s\le t)$. Moreover, $\overset{d}{\longrightarrow}$ denotes the convergence in distribution of random variables and $\overset{w}{\longrightarrow}$ denotes the weak convergence of probabilities in   $\mcal P(\mbb R^d)$.

Denote by $\mbf C(\mbb R^d;\mbb R^m)$ (resp. $\mbf C^{k}(\mbb R^d;\mbb R^m)$) the space consisting of continuous (resp. $k$th continuously differentiable) functions from $\mbb R^d$ to $\mbb R^m$. Let $\mbf C_b^k(\mbb R^d;\mbb R^m)$  stand for the set of bounded and $k$th continuously differentiable functions from $\mbb R^d$ to $\mbb R^m$ with bounded derivatives up to order $k$. Denote by $\mbf C_b(\mbb R^d;\mbb R^m)$ the set of bounded  and continuous functions from $\mbb R^d$ to $\mbb R^m$.
When no confusion occurs,  $\mbf C(\mbb R^d;\mbb R^m)$ is simply written as $\mbf C(\mbb R^d)$, and so are $\mbf C_b(\mbb R^d;\mbb R^m)$, $\mbf C^k(\mbb R^d;\mbb R^m)$ and  $\mbf C^k_b(\mbb R^d;\mbb R^m)$.
For $l\in\mbb N^+$, denote by $Poly(l,\mbb R^d)$ the set of functions growing polynomially with order $l$, i.e.,
$Poly(l,\mbb R^d):=\big\{g\in\mbf C(\mbb R^d;\mbb R):|g(x)-g(y)|\le  K(g)(1+|x|^{l-1}+|y|^{l-1})|x-y|~\text{for some}~K(g)>0 \big\}$.  For $f\in\mathbf C^k(\mathbb R^d;\mathbb R)$, denote by  $\nabla^k f(x)(\xi_1,\ldots,\xi_k)$ the $k$th order G\v ateaux derivative along the directions $\xi_1,\ldots,\xi_k\,\in\mathbb R^d$, i.e., $\nabla^k f(x)(\xi_1,\ldots,\xi_k)=\sum_{i_1,\ldots,i_k=1}^d\,\frac{\partial^k f(x)}{\partial x^{i_1}\cdots\partial x^{i_k}}\,\xi_1^{i_1}\cdots\xi_k^{i_k}$. For $f=(f_1,\ldots,f_m)^\top\in\mathbf C^k(\mathbb R^d; \mathbb R^{m})$, denote $\nabla ^kf(x)(\xi_1,\ldots,\xi_k)=(\nabla ^kf_1(x)(\xi_1,\ldots,\xi_k),\ldots,\nabla ^kf_m(x)(\xi_1,\ldots,\xi_k))^\top$. The  G\v ateaux derivative for a matrix-valued function is defined as previously. For $f\in \mbf C^k(\mbb R^d;\mbb R)$ the notation $\nabla ^kf(x)$ is viewed as a tensor, i.e., a multilinear form defined on $(\mbb R^d)^{\otimes^k}$. And  $\|\cdot\|_{\otimes}$ denotes the norm of a tensor. Throughout this paper, let $K(a_1,a_2,...,a_m)$ denote some generic constant dependent on the parameters $a_1,a_2,...,a_m$ but independent of the step-size $\tau$, which may
vary for each appearance.

\subsection{Settings}\label{Sec2.1}
Let us first  give the assumptions on $b$ and $\sigma$.

\begin{assum}\label{assum1}
	There exist constants $L_1, L_2\in(0,+\infty)$ such that
	\begin{gather*}
		\|\sigma(u_1)-\sigma(u_2)\|_{HS}\le L_1|u_1-u_2|\quad\forall~u_1,u_2\in\mbb R^d,\label{sigLip}\\
		\|\sigma(u)\|_{HS}\le L_2\quad\forall~u\in\mbb R^d.\label{sigbounded}
	\end{gather*}
\end{assum}

\begin{assum}\label{assum2}
	There exist $c_1>\frac{15}{2}L_1^2$, $L_3>0$ and $q\ge 1$ such that
	\begin{gather}
		\LL u_1-u_2,b(u_1)-b(u_2)\RR\le -c_1|u_1-u_2|^2 \quad\forall~u_1,u_2\in\mbb R^d,\nonumber\\
		|b(u_1)-b(u_2)|\le L_3(1+|u_1|^{q-1}+|u_2|^{q-1})|u_1-u_2|\quad\forall~u_1,u_2\in\mbb R^d.\label{bsuper}
	\end{gather}
\end{assum}
The above two assumptions ensure the well-posedness of \eqref{SDE}; see e.g., \cite{Liuwei23}. And the generator of \eqref{SDE} is given by
\begin{align}\label{generator}
	\mcal Lf(x)=\LL \nabla  f(x),b(x)\RR+\frac{1}{2}\LL \nabla^2f(x),\sigma(x)\sigma(x)^\top\RR_{HS}, ~f\in\mbf C^2(\mbb R^d;\mbb R).
\end{align}
Notice that $\text{trace}(\nabla^2 f(x)\sigma(x)\sigma(x)^\top)=\LL \nabla^2 f(x),\sigma(x)\sigma(x)^\top\RR_{HS}$.

As an immediate result of \eqref{bsuper}, 
	\begin{align}\label{bgrowth}
	|b(u)|\le L_4(1+|u|^q)\quad\forall~u\in\mbb R^d
	\end{align}
	for some $L_4>0$. In addition, it is straightforward to conclude from Assumptions \ref{assum1}-\ref{assum2} that  for any $l_2>0$,
	\begin{gather}
		2\LL u_1-u_2,b(u_1)-b(u_2)\RR+15\|\sigma(u_1)-\sigma(u_2)\|_{HS}^2\le -L_5|u_1-u_2|^2\quad\forall~u_1,u_2\in\mbb R^d,\label{couple1}\\
		2\LL u,b(u)\RR+l_2\|\sigma(u)\|_{HS}^2\le -c_1|u|^2+\frac{1}{c_1}|b(0)|^2+l_2L_2^2\quad\forall~u\in\mbb R^d, \label{ubu}
	\end{gather}
where $L_5:=2c_1-15L_1^2$. Note that Assumptions \ref{assum1}-\ref{assum2} in this paper imply Assumptions $2.1$-$2.4$ in \cite{Liuwei23}, by taking $A=-\varepsilon I_d$, $f(x)=b(x)+\varepsilon x$ and $g(x)=\sigma(x)$ in \cite[Eq.\  (2)]{Liuwei23} with $\varepsilon$ small enough. Thus, all conclusions in \cite{Liuwei23} apply to our case provided that Assumptions \ref{assum1}-\ref{assum2} hold.

In order to give the regularity of the solution to the Poisson equation, we need the following assumption.
\begin{assum}\label{assum3}
Let $\sigma\in\mbf C_b^4(\mbb R^d)$ and $b\in\mbf C^4(\mbb R^d)$. In addition,  there exist $q'\ge 1$ and $L_6>0$ such that for $i=1,2,3,4$,
\begin{align*}
	\|\nabla^i b(u)\|_{\otimes}\le L_6(1+|u|^{q'})\quad\forall~ u\in\mbb R^d.
\end{align*} 
\end{assum}

\begin{rem}
	Under Assumptions \ref{assum1}-\ref{assum3}, it holds that
	\begin{align}\label{couple2}
		2\LL v,\nabla b(u)v\RR+15\|\nabla \sigma(u)v\|^2_{HS}\le -L_5|v|^2\quad\forall~ u,v\in\mbb R^d.
	\end{align}
In fact, it follows from \eqref{couple1} that for any $u,v\in\mbb R^d$ and $t\in\mbb R$, $2t\LL v,b(u+tv)-b(u)\RR+15\|\sigma(u+tv)-\sigma(u)\|^2_{HS}\le -L_5t^2|v|^2$. Then the Taylor expansion yields that for any $t\in\mbb R$,   $2t^2\LL v,\nabla b(u)v\RR+15t^2\|\nabla\sigma(u)v\|^2_{HS}+\mcal O(t^3)\le -L_5t^2|v|^2$, which implies \eqref{couple2}.
\end{rem}

Next, we recall some basic knowledge about the invariant measure and ergodicity. Denote by $X^{s,x}(t)$ the solution to \eqref{SDE} at time $t$, starting from $X(s)=x$. Especially, denote $X^x(t):=X^{0,x}(t)$. Let $\pi_t(x,\cdot)$ denote the transition probability kernel of $\{X(t)\}_{t\ge 0}$, i.e., $\pi_t(x,A)=\mbf P(X^x(t)\in A)$ for any $A\in \mscr B(\mbb R^d)$. For any $\phi\in \mbf B_b(\mbb R^d)$ and $t\ge0$, define the operator $P_t:\mbf B_b(\mbb R^d)\to\mbf B_b(\mbb R^d)$ by $(P_t\phi)(x):=\mbf E\phi(X^x(t))=\int_\mbb R\phi(y)\pi_t(x,\ud y)$. Then, $\{P_t\}_{t\ge 0}$ is a  Markov semigroup on $\mbf B_b(\mbb R^d)$. Here, $\mbf B_b(\mbb R^d)$ is the space of all bounded and measureale functions.
A probability measure $\mu\in\mcal P(\mbb R^d)$ is called an invariant measure of $\{X(t)\}_{t\ge 0}$ or $\{P_t\}_{t\ge0}$, if
\begin{align}
	\int_{\mbb R^d} P_t\phi(x)\mu (\ud x)=	\int_{\mbb R^d} \phi(x)\mu (\ud x)\quad\forall~\phi\in\mbf B_b(\mbb R^d),~t\ge 0.\label{invariant}
\end{align}
Further, an invariant measure $\mu$ is called an ergodic measure of $\{X(t)\}_{t\ge 0}$ or $\{P_t\}_{t\ge0}$, if for any $\phi\in\mbf B_b(\mbb R^d)$,
\begin{align*}
\lim_{T\to+\infty}\frac{1}{T}\int_0^TP_t\phi(x)\ud t=\int_{\mbb R^d} \phi(x)\mu (\ud x)\quad \text{in}~\mbf L^2(\mbb R^d,\mu),
\end{align*}
where $\mbf L^2(\mbb R^d,\mu)$ is the space of all square integrable functions with respect to (w.r.t.) $\mu$. Especially, if $\mu$ is the unique invariant measure of $\{X(t)\}_{t\ge 0}$, then $\mu$ is also the ergodic measure. 
We refer readers to \cite{Wangxu} for more details.

\begin{pro}\label{Xtesti}
	Let Assumptions \ref{assum1}-\ref{assum2} hold. Then, the following hold.
	\begin{itemize}
	\item[(1)] For any $p\ge 1$, $\sup\limits_{t\ge 0}\mbf E|X^x(t)|^p\le K(p)(1+|x|^p)$.
	\item[(2)] For any $t,s\ge 0$, $\big(\mbf E|X^x(t)-X^x(s)|^2\big)^{\frac12}\le K(1+|x|^q)|t-s|^{\frac12}$.
\item[(3)] For any $t\ge 0$, $\big(\mbf E|X^x(t)-X^y(t)|^2\big)^{\frac12}\le |x-y|e^{-\frac{L_5t}{2}}$.
	\end{itemize}
\end{pro}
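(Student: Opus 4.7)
The plan is to prove the three parts via It\^o's formula and Gronwall's inequality, with every step driven by the coercivity/dissipativity estimates \eqref{couple1}--\eqref{ubu} and the boundedness of $\sigma$.

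For part (1), I would first dispatch $p=2$: apply It\^o's formula to $|X^x(t)|^2$, then use \eqref{ubu} with $l_2=1$ to bound $2\LL X^x(t),b(X^x(t))\RR+\|\sigma(X^x(t))\|_{HS}^2\le -c_1|X^x(t)|^2+c_1^{-1}|b(0)|^2+L_2^2$. After localization at $\tau_n:=\inf\{t\ge 0:|X^x(t)|\ge n\}$ (needed because $b$ grows super-linearly), taking expectations yields the linear differential inequality $\frac{\ud}{\ud t}\mbf E|X^x(t\wedge\tau_n)|^2\le -c_1\mbf E|X^x(t\wedge\tau_n)|^2+C$; Gronwall together with Fatou's lemma then delivers $\sup_{t\ge 0}\mbf E|X^x(t)|^2\le K(1+|x|^2)$. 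For $p\ge 2$, applying It\^o's formula to $|X^x(t)|^p$ and estimating $|\sigma(X^x)^\top X^x|^2\le\|\sigma(X^x)\|_{HS}^2|X^x|^2$ produces a term proportional to $|X^x|^{p-2}\bigl[\LL X^x,b(X^x)\RR+\tfrac{p-1}{2}\|\sigma(X^x)\|_{HS}^2\bigr]$; choosing $l_2=p-1$ in \eqref{ubu}, and using Young's inequality to absorb the lower-order $|X^x|^{p-2}$ contributions into a multiple of $|X^x|^p$ plus a constant, closes the analogous Gronwall argument. The case $p\in[1,2)$ follows from Jensen applied to the $p=2$ estimate.

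For part (2), split $X^x(t)-X^x(s)=\int_s^t b(X^x(r))\,\ud r+\int_s^t\sigma(X^x(r))\,\ud W(r)$. Cauchy--Schwarz on the drift, together with the polynomial bound $|b(u)|\le L_4(1+|u|^q)$ from \eqref{bgrowth} and part (1), yields an $\mbf L^2$ contribution of order $K(1+|x|^{2q})(t-s)^2$; the It\^o isometry together with $\|\sigma\|_{HS}\le L_2$ contributes at most $L_2^2(t-s)$. Summing, taking square roots, and restricting to $|t-s|\le 1$ yields the bound $K(1+|x|^q)|t-s|^{1/2}$, while for $|t-s|>1$ the estimate reduces directly to part (1) via the triangle inequality.

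For part (3), set $Y(t):=X^x(t)-X^y(t)$ and apply It\^o's formula to $|Y(t)|^2$. The coefficient estimate \eqref{couple1} (whose factor $15$ leaves far more room than the $1$ actually required in front of the diffusion term) bounds $2\LL Y(t),b(X^x(t))-b(X^y(t))\RR+\|\sigma(X^x(t))-\sigma(X^y(t))\|_{HS}^2$ by $-L_5|Y(t)|^2$. After the same localization to kill the stochastic integral in expectation, $\frac{\ud}{\ud t}\mbf E|Y(t)|^2\le -L_5\mbf E|Y(t)|^2$ and Gronwall yield $\mbf E|Y(t)|^2\le|x-y|^2e^{-L_5 t}$, whose square root is the asserted contraction. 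The only technical care needed across all three parts is the localization justifying integration against the martingale part under super-linearly growing $b$; once this standard one-sided-Lipschitz device is in place, the argument is dictated entirely by the built-in dissipativity of Assumptions \ref{assum1}--\ref{assum2}.
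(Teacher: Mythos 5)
Your proof is correct and matches the paper's route: the paper simply cites \cite[Propositions 3.1--3.2]{Liuwei23} for parts (1)--(2), whose underlying argument is exactly your It\^o--dissipativity--Gronwall scheme based on \eqref{ubu} and the boundedness of $\sigma$, and states that part (3) ``can be obtained by applying the It\^o formula,'' which is precisely your contraction estimate via \eqref{couple1}. The localization point you flag is the right (standard) technicality to handle under the super-linear drift, and the rest of the details check out.
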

The first and second conclusions come from \cite[Proposition 3.1-3.2]{Liuwei23}. And the third conclusion can be obtained by applying the It\^o formula. In addition, \cite[Theorem 3.1]{Liuwei23} gives the ergodicity for $\{X(t)\}_{t\ge 0}$.

\begin{pro}\label{Xtergodic}
	Let Assumptions \ref{assum1}-\ref{assum2} hold. Then we have the following.\\
	(1) $\{X(t)\}_{t\ge 0}$ admits a unique invariant measure $\pi\in\mcal P(\mbb R^d)$.\\
	(2) For any $p\ge 1$, $\pi(|\cdot|^p)<+\infty$.\\
	(3) There is $\lambda_1>0$ such that for any $f\in Poly(l,\mbb R^d)$, $l\ge1$ and $t\ge 0$,
	\begin{gather}
	\big|\mbf Ef(X^x(t))-\pi(f)\big|\le K(f)(1+|x|^l)e^{-\lambda_1t}.\label{pif}
	\end{gather}
\end{pro}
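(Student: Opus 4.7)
The plan is to derive (1)--(3) from Proposition \ref{Xtesti}, using part (1) for uniform moment bounds and part (3) as a pathwise $L^2$-contraction between the solutions started from $x$ and $y$ under a common Brownian motion.

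For existence in (1), I would apply the Krylov--Bogoliubov procedure to the time averages $\mu_T^x := \frac{1}{T}\int_0^T\pi_t(x,\cdot)\,\ud t$. The uniform moment bound $\sup_{t\ge 0}\mbf E|X^x(t)|^p\le K(p)(1+|x|^p)$ renders $\{\mu_T^x\}_{T>0}$ tight via Markov's inequality, and any weak limit point is invariant for $P_t$ by the standard Feller-type computation. For uniqueness, any invariant measure $\pi'$ satisfies $\pi'(\phi)=\int P_t\phi\,\ud\pi'$; Proposition \ref{Xtesti}(3) combined with the Lipschitz bound on $\phi$ yields $|P_t\phi(x)-P_t\phi(y)|\le \mrm{Lip}(\phi)|x-y|e^{-L_5 t/2}$, so $P_t\phi$ becomes asymptotically constant in its argument, forcing all invariant measures to assign the same integral to $\phi$, first for bounded Lipschitz $\phi$ and then for all $\phi\in\mbf B_b(\mbb R^d)$ by a density argument.

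Conclusion (2) then follows because weak convergence of $\pi_t(x,\cdot)$ to $\pi$ combined with $\sup_t \mbf E|X^x(t)|^{p+1}<+\infty$ gives uniform integrability of $|\cdot|^p$ under $\pi_t(x,\cdot)$, so Fatou's lemma applied to truncations $|\cdot|^p\wedge N$ followed by $N\to\infty$ yields $\pi(|\cdot|^p)\le K(p)(1+|x|^p)<+\infty$.

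For (3), the invariance of $\pi$ gives $\pi(f)=\int\mbf Ef(X^y(t))\,\pi(\ud y)$, whence
\begin{align*}
\bigl|\mbf Ef(X^x(t))-\pi(f)\bigr|\le\int\mbf E\bigl|f(X^x(t))-f(X^y(t))\bigr|\,\pi(\ud y).
\end{align*}
For $f\in Poly(l,\mbb R^d)$ the integrand is bounded by $K(f)\,\mbf E[(1+|X^x(t)|^{l-1}+|X^y(t)|^{l-1})|X^x(t)-X^y(t)|]$; Cauchy--Schwarz, then Proposition \ref{Xtesti}(1) on the first factor and Proposition \ref{Xtesti}(3) on the second, and finally (2) to integrate out $y$, produce the bound $K(f,l)(1+|x|^l)e^{-L_5 t/2}$, using $(1+|x|^{l-1}+|y|^{l-1})|x-y|\le C_l(1+|x|^l+|y|^l)$; hence $\lambda_1=L_5/2$ suffices. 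The main obstacle is making the uniqueness step rigorous, in particular passing from bounded Lipschitz test functions to all of $\mbf B_b(\mbb R^d)$; this is handled by a standard monotone class / density argument once the contraction of $P_t\phi$ is in place.
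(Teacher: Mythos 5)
Your treatment of parts (2) and (3) is essentially the paper's own argument: truncate $|\cdot|^p$ (resp.\ $f$) at level $M$, use the weak convergence of $\pi_t(x,\cdot)$ to $\pi$ together with the uniform moment bound from Proposition \ref{Xtesti}(1) and Fatou's lemma for (2); and for (3), write $\pi(f)=\int \mbf E f(X^y(t))\,\pi(\ud y)$ by invariance, bound the integrand via the polynomial-growth modulus, Cauchy--Schwarz, Proposition \ref{Xtesti}(1) and the contraction of Proposition \ref{Xtesti}(3), and integrate out $y$ using (2); both you and the paper land on the rate $\lambda_1=L_5/2$. Where you genuinely diverge is part (1): the paper simply cites \cite[Theorem 3.1]{Liuwei23} (which also supplies the weak convergence $\pi_t(x,\cdot)\overset{w}{\longrightarrow}\pi$ used later), whereas you give a self-contained Krylov--Bogoliubov construction plus an asymptotic-coupling uniqueness argument; this buys independence from the external reference at the cost of two small points you should make explicit. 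First, in the uniqueness step the naive bound $|\pi_1(\phi)-\pi_2(\phi)|\le \mrm{Lip}(\phi)e^{-L_5t/2}\int\int|x-y|\,\pi_1(\ud x)\pi_2(\ud y)$ presupposes that \emph{every} invariant measure has a finite first moment, which you have not yet established at that stage; the standard fix is to use that each $\pi_i$, being a single Borel probability measure on $\mbb R^d$, is tight, split the double integral over $\{|x|\vee|y|\le R\}$ and its complement, and send $t\to\infty$ before $R\to\infty$. Second, in (3) you apply the invariance identity \eqref{invariant} directly to the unbounded $f\in Poly(l,\mbb R^d)$, whereas it is stated only for $\phi\in\mbf B_b(\mbb R^d)$; the paper avoids this by working throughout with $f\wedge M$ and removing the truncation by monotone convergence at the very end, and you need the analogous one-line extension of invariance to $\pi$-integrable functions. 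Neither point is a genuine gap, but both deserve a sentence.
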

\begin{proof}
It follows from \cite[Theorem 3.1]{Liuwei23} that $\{X(t)\}_{t\ge0}$ admits a unique invariant measure $\pi\in\mcal P(\mbb R^d)$, and $\pi_t(x,\cdot)\overset{w}{\longrightarrow}\pi$ as $t\to+\infty$ for any $x\in\mbb R^d$. Especially, $\pi_t(0,\cdot)\overset{w}{\longrightarrow}\pi$, which implies that for any $M>0$,
\begin{align*}
	\int_{\mbb R^d}(|x|^p\wedge M)\pi(\ud x)&=\lim_{t\to+\infty}\int_{\mbb R^d}(|x|^p\wedge M)\pi_t(0,\ud x)\\
	&\le M\wedge  \limsup_{t\to+\infty}\mbf E|X^0(t)|^p\le K,
\end{align*}
where we used $|\cdot|^p\wedge M\in\mbf C_b(\mbb R^d)$ and Proposition \ref{Xtesti}(1). Then the Fatou lemma gives
\begin{align*}
\pi(|\cdot|^p)=\int_{\mbb R^d}|x|^p\;\pi(\ud x)\le \liminf_{M\to+\infty}\int_{\mbb R^d}(|x|^p\wedge M)\pi (\ud x)\le K.	
\end{align*}

For any $M>0$ and $f\in Poly(l,\mbb R^d)$, it holds that $f\wedge M\in\mbf C_b(\mbb R^d)$.  Accordingly, it follows from the definition of the invariant measure (see \eqref{invariant}) that
\begin{align*}
	\pi(f\wedge M)=\int_{\mbb R^d} P_t(f\wedge M)(y)\pi (\ud y).
\end{align*} 
Thus,  using Proposition \ref{Xtesti}(2), the H\"older inequality, the fact $|a\wedge b-a\wedge c|\le |b-c|$ and the second conclusion, we conclude that for any $M>0$,
\begin{align*}
	&\;|\mbf E(f(X^x(t))\wedge M)-\pi(f\wedge M)|=\Big|P_t(f\wedge M)(x)-\int_{\mbb R^d} P_t(f\wedge M)(y)\pi (\ud y)\Big|\\
	= &\; \Big|\int_{\mbb R^d}\big[P_t(f\wedge M)(x)-P_t(f\wedge M)(y)\big]\pi (\ud y)\Big|\\
	\le &\;\int_{\mbb R^d}\big|\mbf E(f(X^x(t))\wedge M)-\mbf E(f(X^y(t))\wedge M)\big|\pi (\ud y)\\
	\le &\;\int_{\mbb R^d}\mbf E|f(X^x(t))-f(X^x(y))|\pi(\ud y)\\
	\le&\; K(f)\int_{\mbb R^d}(1+(\mbf E|X^x(t)|^{2l-2})^{\frac12}+(\mbf E|X^y(t)|^{2l-2})^{\frac12})\big(\mbf E|X^x(t)-X^y(t)|^2\big)^{\frac12}\pi (\ud y)\\
	\le&\;K(f)e^{-\frac{L_5}{2}t}\int_{\mbb R^d}(1+|x|^{l-1}+|y|^{l-1})|x-y|\pi(\ud y)\\
	\le &\; K(f)e^{-\frac{L_5}{2}t}(1+|x|^l).
\end{align*}
The above formula and the monotone convergence theorem lead to \eqref{pif}, which completes the proof.
\end{proof}

\section{Main results}\label{Sec3}
In this section, we give our main result, i.e., the CLT for  the temporal average $\Pi_{\tau,\alpha}(h)$ of the BEM method used to approximate the ergodic limit $\pi(h)$.  The BEM method has been widely applied to approximating  SODEs or SPDEs with non-Lipschitz coefficients; see e.g., \cite{CaiGanHu23,HHKW20,LiuQiao20}  and references therein.
Let $\tau>0$ be the temporal step-size. The BEM method for \eqref{SDE} reads
\begin{align}\label{BEM}
		\bar{X}_{n+1}=\bar{X}_n+b(\bar{X}_{n+1})\tau+\sigma(\bar{X}_n)\Delta W_n,\quad n=0,1,2,\ldots,
\end{align} 
where $\Delta W_n:=W(t_{n+1})-W(t_n)$ with $t_n=n\tau$. We denote by $\bar{X}^{k,x}_n$ the solution to \eqref{BEM} at the $n$th step provided $\bar{X}_k=x$. Especially, denote $\bar{X}_n^x:=\bar{X}^{0,x}_n$, i.e., the solution to \eqref{BEM} with the initial value $x\in\mbb R^d$. 

The following are some known results about \eqref{BEM}, which can  be found in Lemmas $4.1$-$4.2$ and Theorems $4.2$ in \cite{Liuwei23}.
\begin{pro}\label{BEMesti}
	Let Assumptions \ref{assum1}-\ref{assum2} hold and $\tau$ sufficiently small.  Then the following properties hold.
	\begin{itemize}
		\item [(1)] $\sup\limits_{n\ge 0}\mbf E|\bar{X}^x_n|^2\le K(1+|x|^2)$.
		\item[(2)] There is $\xi_1>0$ such that for any $n\ge 0$,
		$\big(\mbf E|\bar{X}^x_n-\bar{X}^y_n|^2\big)^{\frac12}\le K|x-y|e^{-\xi_1n\tau}$.
		\item [(3)] $\sup\limits_{n\ge 0}\mbf E|X^x(t_n)-\bar{X}^x_n|^2\le K(x)\tau.$
	\end{itemize}
\end{pro}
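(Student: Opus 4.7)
All three statements follow from a single algebraic identity combined with the dissipativity inequalities \eqref{ubu} and \eqref{couple1}. Rewriting \eqref{BEM} as $\bar{X}_{n+1}-\tau b(\bar{X}_{n+1})=\bar{X}_n+\sigma(\bar{X}_n)\Delta W_n$ and squaring both sides yields
\[
|\bar{X}_{n+1}|^2-2\tau\langle \bar{X}_{n+1},b(\bar{X}_{n+1})\rangle+\tau^2|b(\bar{X}_{n+1})|^2=|\bar{X}_n+\sigma(\bar{X}_n)\Delta W_n|^2.
\]
My plan is to iterate this relation in each of the three settings, substituting the appropriate difference for the pair $(b,\sigma)$ when comparing two trajectories.

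For (1), I would apply \eqref{ubu} (with, say, $l_2=1$) to bound $-2\langle \bar{X}_{n+1},b(\bar{X}_{n+1})\rangle\ge c_1|\bar{X}_{n+1}|^2-K$ from below, discard the nonnegative $\tau^2|b(\bar{X}_{n+1})|^2$ term, and take conditional expectation with respect to $\mcal F_{t_n}$ on the right-hand side; the independence of $\Delta W_n$ from $\bar{X}_n$ together with the boundedness of $\sigma$ from Assumption \ref{assum1} gives $\mbf E[|\bar{X}_n+\sigma(\bar{X}_n)\Delta W_n|^2\mid\mcal F_{t_n}]\le|\bar{X}_n|^2+\tau L_2^2$. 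This produces the recursion $(1+c_1\tau)\mbf E|\bar{X}_{n+1}|^2\le \mbf E|\bar{X}_n|^2+\tau K$, which iterates immediately to the uniform-in-$n$ second moment bound. For (2), applying the same identity to $\Delta_n:=\bar{X}^x_n-\bar{X}^y_n$ with $b(\bar{X}^x_{n+1})-b(\bar{X}^y_{n+1})$ and $\sigma(\bar{X}^x_n)-\sigma(\bar{X}^y_n)$ in place of $b(\bar{X}_{n+1})$ and $\sigma(\bar{X}_n)$, and combining the Lipschitz bound on $\sigma$ with the one-sided inequality in Assumption \ref{assum2}, yields $(1+2c_1\tau)\mbf E|\Delta_{n+1}|^2\le(1+L_1^2\tau)\mbf E|\Delta_n|^2$; since $c_1>\tfrac{15}{2}L_1^2$, the ratio is bounded by $e^{-2\xi_1\tau}$ for some $\xi_1>0$ and $\tau$ small, and iterating gives the exponential contraction.

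For (3), I would follow the usual split into a one-step consistency error of order $\tau^{3/2}$ and a propagation step controlled by the contraction from (2). Concretely, comparing $X^x(t_{n+1})$ with the BEM map applied to $X^x(t_n)$, Taylor expanding $b(X^x(s))$ and $\sigma(X^x(s))$ around the grid points, and inserting the result into a discrete variation-of-constants formula whose geometric factor is inherited from (2) bounds the global error by a geometrically summable series of local errors. The main obstacle, and the reason for the $K(x)$ dependence on the initial datum, is the super-linear growth of $b$ in \eqref{bsuper}: the consistency error carries factors of the form $(1+|X^x(t)|^{q-1}+|\bar{X}^x_n|^{q-1})$, so before the propagation argument can be applied one must secure uniform-in-$n$ higher moment bounds on both processes. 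These are available for the exact solution from Proposition \ref{Xtesti}(1) and for the BEM scheme by extending the argument in (1) to larger exponents via repeated use of the dissipativity. The geometric decay from (2) then prevents accumulation of local errors over arbitrarily many steps and delivers the uniform strong order $\tfrac{1}{2}$.
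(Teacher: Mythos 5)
The paper does not actually prove this proposition: it is imported verbatim from \cite{Liuwei23} (Lemmas 4.1--4.2 and Theorem 4.2), so any argument you supply is by construction a different route. Your parts (1) and (2) are correct and complete. The identity obtained by squaring $\bar X_{n+1}-\tau b(\bar X_{n+1})=\bar X_n+\sigma(\bar X_n)\Delta W_n$, combined with \eqref{ubu} and the independence of $\Delta W_n$ from $\mcal F_{t_n}$, is exactly the computation the paper itself performs later in the proof of Theorem \ref{pth-moment} (see \eqref{Eq11}--\eqref{eq1} and the ensuing iteration); your (1) is the $p=1$ base case of that induction. Your (2) is the standard discrete contraction estimate, and the ratio $(1+L_1^2\tau)/(1+2c_1\tau)\le e^{-2\xi_1\tau}$ for small $\tau$ does follow from $c_1>\tfrac{15}{2}L_1^2$ (indeed only $2c_1>L_1^2$ is needed at this point).

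Part (3) is only a sketch and contains one quantitative misstatement that matters. For multiplicative noise the one-step $L^2$ consistency error of the (backward) Euler map is $O(\tau)$, not $O(\tau^{3/2})$: the martingale term $\int_{t_n}^{t_{n+1}}\bigl[\sigma(X(s))-\sigma(X(t_n))\bigr]\,\ud W(s)$ alone has second moment $\int_{t_n}^{t_{n+1}}\mbf E\|\sigma(X(s))-\sigma(X(t_n))\|_{HS}^2\,\ud s=O(\tau^2)$, i.e.\ $L^2$ norm of order $\tau$. A geometric summation of $O(\tau)$ local errors against the contraction factor only yields an $O(1)$ global bound, so the argument as you state it does not close. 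The standard repair is to split the local error into its conditional-mean part, which is $O(\tau^{3/2})$ and can be summed geometrically as you describe, and the conditionally centred martingale part, whose \emph{squares} rather than norms accumulate (no cross term with $e_n$ in the squared-error recursion), contributing $O(\tau^2)$ per step and hence $O(\tau)$ in total; this delivers $\sup_n\mbf E|X^x(t_n)-\bar X^x_n|^2\le K(x)\tau$. Your identification of the other obstruction --- that the super-linear growth of $b$ in \eqref{bsuper} forces uniform-in-$n$ higher moment bounds on $\bar X_n$, obtainable by pushing the argument of (1) to higher powers --- is correct, and is precisely what the paper establishes as Theorem \ref{pth-moment}.
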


Recall that the temporal average of the BEM method is
\begin{align*}
\Pi_{\tau,\alpha}(h)=\frac{1}{\tau^{-\alpha}}\sum_{k=0}^{\tau^{-\alpha}-1}h(\bar{X}^x_k), \quad\alpha\in(1,2],~h\in \mbf C^4_b(\mbb R^d).
\end{align*}
Define the function $\varphi:\mbb R^d\to\mbb R$ by
\begin{align}
	\varphi(x)=-\int_0^\infty \mbf E\big(h(X^x(t))-\pi(h)\big)\ud t,~x\in\mbb R^d \label{eq10},
\end{align}
which is indeed a solution to the Poisson equation $\mcal L\varphi=h-\pi(h)$ due to Lemma \ref{varphi}. Then we have the following CLT for $\Pi_{\tau,\alpha}(h)$, $\alpha\in(1,2)$.

\begin{theo}\label{CLT1}
	Let Assumptions \ref{assum1}-\ref{assum3} hold and $h\in\mbf C_b^4(\mbb R^d)$. \\
(1)	Let  $\{Y_n\}_{n\ge0}$ be a numerical solution for \eqref{SDE}. Suppose that there is $K>0$ independent of $\tau$ such that
	\begin{align}
		\sup_{n\ge 0}\mbf E|X(t_n)-Y_n|^2\le K\tau.\label{strong}
	\end{align}
Then for any $\alpha\in(1,2)$,
\begin{align}
	\frac{1}{\tau^{\frac{\alpha-1}{2}}}\Big(\frac{1}{\tau^{-\alpha}}\sum_{k=0}^{\tau^{-\alpha}-1}h(Y_k)-\pi(h)\Big)\overset{d}{\longrightarrow}\mcal N(0,\pi(|\sigma^\top\nabla \varphi|^2)) \quad\text{as}~\tau\to 0.\label{Eq9}
\end{align}
(2) For any $\alpha\in(1,2)$ and $x\in\mbb R^d$,
	\begin{align}
		\frac{1}{\tau^{\frac{\alpha-1}{2}}}(\Pi_{\tau,\alpha}(h)-\pi(h))\overset{d}{\longrightarrow}\mcal N(0,\pi(|\sigma^\top\nabla \varphi|^2)) \quad\text{as}~\tau\to 0.\label{Eq10}
	\end{align}
\end{theo}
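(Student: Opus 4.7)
The plan is to decompose the normalized temporal average into a dominant piece handled by the classical CLT for ergodic diffusions plus two remainders controlled by the strong order assumption and the H\"older-in-time regularity of the exact solution. Set $N:=\tau^{-\alpha}$ and $T_\tau:=N\tau=\tau^{1-\alpha}$, which tends to $+\infty$ as $\tau\to 0$ since $\alpha>1$. Using $\sqrt{T_\tau}=\tau^{(1-\alpha)/2}$ and $\sqrt{T_\tau}/N=\tau^{(1+\alpha)/2}$, I would write
\[
\frac{1}{\tau^{(\alpha-1)/2}}\Big(\frac{1}{N}\sum_{k=0}^{N-1}h(Y_k)-\pi(h)\Big)=A_\tau+B_\tau+C_\tau,
\]
with
\[
A_\tau:=\frac{1}{\sqrt{T_\tau}}\int_0^{T_\tau}\bigl(h(X^x(t))-\pi(h)\bigr)\,\ud t,\qquad C_\tau:=\frac{\sqrt{T_\tau}}{N}\sum_{k=0}^{N-1}\bigl(h(Y_k)-h(X^x(t_k))\bigr),
\]
\[
B_\tau:=\frac{1}{\sqrt{T_\tau}}\sum_{k=0}^{N-1}\int_{t_k}^{t_{k+1}}\bigl(h(X^x(t_k))-h(X^x(t))\bigr)\,\ud t.
\]

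For the main term $A_\tau$, I would invoke the classical CLT for ergodic diffusions (\cite[Theorem 2.1]{FCLT82}) to conclude $A_\tau\overset{d}{\longrightarrow}\mcal N(0,\pi(|\sigma^\top\nabla\varphi|^2))$ as $\tau\to 0$. For the two remainders, since $h\in\mbf C_b^4$ is globally Lipschitz, I would combine Proposition \ref{Xtesti}(2) together with Fubini for $B_\tau$ and the assumption \eqref{strong} together with the Cauchy--Schwarz inequality for $C_\tau$, obtaining
\[
\mbf E|B_\tau|\le \frac{\|\nabla h\|_\infty K(1+|x|^q)}{\sqrt{T_\tau}}\sum_{k=0}^{N-1}\int_{t_k}^{t_{k+1}}\sqrt{t-t_k}\,\ud t\le C(x)\sqrt{T_\tau\tau}=C(x)\tau^{(2-\alpha)/2},
\]
\[
\mbf E|C_\tau|\le \frac{\|\nabla h\|_\infty\sqrt{T_\tau}}{N}\sum_{k=0}^{N-1}\sqrt{\mbf E|Y_k-X^x(t_k)|^2}\le C\sqrt{T_\tau\tau}=C\tau^{(2-\alpha)/2}.
\]
Both bounds vanish for $\alpha<2$, so $B_\tau,C_\tau\to 0$ in $L^1$, hence in probability. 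Slutsky's theorem then yields \eqref{Eq9}. Part (2) follows at once from part (1) applied with $Y_k=\bar{X}_k^x$, since \eqref{strong} is exactly Proposition \ref{BEMesti}(3); the $x$-dependence of the constant is harmless for a fixed initial datum.

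The main obstacle I anticipate is the rigorous invocation of \cite[Theorem 2.1]{FCLT82} for $A_\tau$: the classical statement is most naturally formulated under the stationary starting law, whereas here $X^x(\cdot)$ starts from a deterministic $x\in\mbb R^d$. I would bridge this gap via Proposition \ref{Xtergodic}(3), which yields exponential equilibration and shows that the difference between the deterministic-start and stationary-start versions of $A_\tau$ is uniformly $O(1/\sqrt{T_\tau})\to 0$ (the transient contribution is bounded on a short initial window independent of $\tau$). One must also verify that the regularity hypotheses on the Poisson solution $\varphi$ required by \cite{FCLT82} are met under Assumptions \ref{assum1}--\ref{assum3}, which is exactly the content of Lemma \ref{varphi} referenced in the introduction.
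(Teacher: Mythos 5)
Your proposal is correct and follows essentially the same route as the paper: the paper writes the normalized average as $J_1(\tau)+J_2(\tau)$, where $J_2(\tau)$ is exactly your $A_\tau$ handled by \cite[Theorem 2.1]{FCLT82} (together with the identity $\varphi\mcal L\varphi=\tfrac12\mcal L\varphi^2-\tfrac12|\sigma^\top\nabla\varphi|^2$ to identify the variance), and $J_1(\tau)$ is your $B_\tau+C_\tau$, bounded in $L^1$ by $K\tau^{(2-\alpha)/2}$ via Proposition \ref{Xtesti}(2) and \eqref{strong} before applying Slutsky. Your extra remark about justifying the functional CLT from a deterministic start is a reasonable point of care that the paper does not elaborate on, but it does not alter the argument.
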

\begin{proof}
	Let $\varphi$ be that in \eqref{eq10}. By  Lemma \ref{varphi}, it holds that $\varphi\in\mbf C^3(\mbb R^d)$ and
	\begin{align*}
		\mcal L\varphi=h-\pi(h). 
	\end{align*}
It follows from \cite[Thoerem 2.1]{FCLT82} that the CLT holds for \eqref{SDE}, i.e.,
\begin{align*}
	\frac{1}{\sqrt{T}}\int_0^T\big(h(X(t))-\pi(h)\big)\ud t\overset{d}{\longrightarrow}\mcal N\big(0,-2\pi(\varphi\mcal L\varphi)\big)\quad\text{as}~T\to \infty.
\end{align*}
By \eqref{generator} and a direct computation, 
\begin{align*}
	\varphi\mcal L\varphi=\frac{1}{2}\mcal L\varphi^2-\frac{1}{2}|\sigma^\top\nabla\varphi|^2.
\end{align*} 
Since $\varphi^2$ belongs to the domain of $\mcal L$, $\pi(\mcal L\varphi^2)=0$ due to \cite[Eq. (2.6)]{FCLT82}.  Combining the above relations, we have
\begin{align}
	\frac{1}{\sqrt{T}}\int_0^T\big(h(X(t))-\pi(h)\big)\ud t\overset{d}{\longrightarrow}\mcal N\big(0,\pi(|\sigma^\top\nabla \varphi|^2)\big)\quad\text{as}~T\to \infty.\label{eq18}
\end{align}

Notice that
\begin{align*}
	&\;\frac{1}{\tau^{\frac{\alpha-1}{2}}}\Big(\frac{1}{\tau^{-\alpha}}\sum_{k=0}^{\tau^{-\alpha}-1}h(Y_k)-\pi(h)\Big)\\
	=&\;\frac{1}{\tau^{\frac{\alpha-1}{2}}}\Big(\frac{1}{\tau^{-\alpha}}\sum_{k=0}^{\tau^{-\alpha}-1}h(Y_k)-\tau^{\alpha-1}\int_0^{\tau^{1-\alpha}}h(X(t))\ud t\Big)+\tau^{\frac{\alpha-1}{2}}\int_0^{\tau^{1-\alpha}}\big(h(X(t))-\pi(h)\big)\ud t\\
	=:&\;J_1(\tau)+J_2(\tau).
\end{align*}
By \eqref{eq18} and $\alpha>1$, $J_2(\tau)\overset{d}{\longrightarrow}\mcal N\big(0,\pi(|\sigma^\top\nabla \varphi|^2)\big)$ as $\tau\to 0$. Denoting $N=\tau^{-\alpha}$,
 we use Proposition \ref{Xtesti}(2), \eqref{strong} and $h\in \mbf C_b^1(\mbb R^d)$ to get
\begin{align*}
\mbf E|J_1(\tau)|=&\;\frac{1}{\tau^{\frac{\alpha-1}{2}}}\mbf E\Big|\frac{1}{N}\sum_{k=0}^{N-1}h(Y_k)-\frac{1}{N\tau}\sum_{k=0}^{N-1}\int_{k\tau}^{(k+1)\tau}h(X(t))\ud t\Big|\\
\le &\;\frac{1}{\tau^{\frac{\alpha-1}{2}}}\frac{1}{N}\sum_{k=0}^{N-1}\mbf E|h(Y_k)-h(X(t_k))|+\frac{1}{\tau^{\frac{\alpha-1}{2}}}\frac{1}{N\tau}\sum_{k=0}^{N-1}\int_{k\tau}^{(k+1)\tau}\mbf E|h(X(t))-h(X(t_k))|\ud t\\
\le&\;K(h)\frac{1}{\tau^{\frac{\alpha-1}{2}}}\sup_{k\ge0}(\mbf E|Y_k-X(t_k)|^2)^{\frac12}+K(h)\frac{1}{\tau^{\frac{\alpha-1}{2}}}\frac{1}{N\tau}\sum_{k=0}^{N-1}\int_{k\tau}^{(k+1)\tau}(\mbf E|X(t)-X(t_k)|^2)^{\frac12}\ud t\\
\le &\;K(h)\frac{1}{\tau^{\frac{\alpha-1}{2}}}\tau^{\frac{1}{2}}=K(h)\tau^{\frac{2-\alpha}{2}}.
\end{align*}
Thus, $\lim\limits_{\tau\to0}\mbf E|J_1(\tau)|=0$ due to $\alpha<2$, which implies that $J_1(\tau)$ converges to $0$ in probability. Thus, \eqref{Eq9} follows by applying the Slutsky theorem.

Finally, \eqref{Eq10} holds as a special case of \eqref{Eq9} due to Proposition \ref{BEMesti}(3). Thus, the proof is complete. 
\end{proof}

\begin{rem}\label{rem1}\phantom{text}\\
(1)	It is observed that $$\frac{1}{\tau^{\frac{\alpha-1}{2}}}(\Pi_{\tau,\alpha}(h)-\pi(h))=\frac{1}{\tau^{\frac{1-\alpha}{2}}}\sum_{k=0}^{\tau^{-\alpha}-1}\big(h(\bar{X}^x_k)-\pi(h)\big)\tau,$$
which can be viewed as a numerical approximation of $\frac{1}{\sqrt{T}}\int_0^T(h(X^x(t))-\pi(h))\ud t$ with $T(\tau)=N\tau$ and $N=\tau^{-\alpha}$. Thus, $\alpha>1$ is required such that $\lim\limits_{\tau\to0}T(\tau)=+\infty$,  which coincides with the CLT for $\{X(t)\}_{t\ge0}$.\\
(2) In fact, we give the CLT of the temporal average for a class of numerical methods satisfying \eqref{strong} for $\alpha\in(1,2)$. We guess that there may be some non-ergodic numerical method whose temporal average satisfies the CTL in view of Theorem \ref{CLT1}(1).
\end{rem}

We close the section by presenting the CLT for $\Pi_{\tau,2}(h)$.
\begin{theo}\label{CLT2}
	Let Assumptions \ref{assum1}-\ref{assum3} hold and $h\in \mbf C^4_b(\mbb R^d)$. Then for any $x\in\mbb R^d$,
	\begin{align*}
		\frac{1}{\sqrt{\tau}}(\Pi_{\tau,2}(h)-\pi(h))\overset{d}{\longrightarrow}\mcal N(0,\pi(|\sigma^\top\nabla \varphi|^2)) \quad\text{as}~\tau\to 0.
	\end{align*}
\end{theo}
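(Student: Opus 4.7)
The plan is to exploit the Poisson equation $\mcal L\varphi=h-\pi(h)$ furnished by Lemma \ref{varphi} to convert the centered sum $\Pi_{\tau,2}(h)-\pi(h)$ into a telescoping term plus martingale increments plus a negligible remainder, following the template of \cite{Xulihu22,Dang23}.  Writing $N=\tau^{-2}$ and applying a third order Taylor expansion of $\varphi$ along the BEM trajectory, combined with the scheme identity $\bar X_{k+1}-\bar X_k=b(\bar X_{k+1})\tau+\sigma(\bar X_k)\Delta W_k$ from \eqref{BEM}, I will decompose
\begin{align*}
\varphi(\bar X_{k+1})-\varphi(\bar X_k)=\mcal L\varphi(\bar X_k)\,\tau+\mcal M_k+\mcal R_k,
\end{align*}
where $\mcal M_k$ gathers the $\mcal F_{t_k}$-conditionally centered contributions---led by $\LL\nabla\varphi(\bar X_k),\sigma(\bar X_k)\Delta W_k\RR$---and $\mcal R_k$ collects everything else: the Taylor remainder, the implicit drift discrepancy $\LL\nabla\varphi(\bar X_k),b(\bar X_{k+1})-b(\bar X_k)\RR\tau$, and the drift-noise cross terms.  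Summing and using $h(\bar X_k)-\pi(h)=\mcal L\varphi(\bar X_k)$ yields
\begin{align*}
\frac{1}{\sqrt\tau}\bigl(\Pi_{\tau,2}(h)-\pi(h)\bigr)=\sqrt\tau\bigl[\varphi(\bar X_N)-\varphi(\bar X_0)\bigr]-\sqrt\tau\sum_{k=0}^{N-1}\mcal M_k-\sqrt\tau\sum_{k=0}^{N-1}\mcal R_k.
\end{align*}

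The main step is to prove, via the martingale CLT, that $-\sqrt\tau\sum_k\mcal M_k\overset{d}{\longrightarrow}\mcal N(0,\pi(|\sigma^\top\nabla\varphi|^2))$.  The dominant summand is $\sqrt\tau\,\LL\nabla\varphi(\bar X_k),\sigma(\bar X_k)\Delta W_k\RR$, whose $\mcal F_{t_k}$-conditional variance equals $\tau^{2}|\sigma(\bar X_k)^\top\nabla\varphi(\bar X_k)|^2$; hence the total conditional variance is $\frac1N\sum_{k=0}^{N-1}|\sigma(\bar X_k)^\top\nabla\varphi(\bar X_k)|^2$.  I will show that it converges in probability to $\pi(|\sigma^\top\nabla\varphi|^2)$ by combining the exponential contractivity of the BEM semigroup on polynomially growing test functions (in the spirit of Proposition \ref{BEMesti}(2)) with the convergence $\pi_\tau\to\pi$ established in \cite{Liuwei23}.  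The other conditionally centered pieces of $\mcal M_k$---namely $\tfrac12\LL\nabla^2\varphi(\bar X_k),\sigma(\bar X_k)(\Delta W_k\Delta W_k^\top-I_D\tau)\sigma(\bar X_k)^\top\RR_{HS}$, the cubic term in $\sigma(\bar X_k)\Delta W_k$, and the linearized contribution from $b(\bar X_{k+1})-b(\bar X_k)$---carry higher $\tau$-weights, so after the $\sqrt\tau$ rescaling their total variances vanish; the conditional Lindeberg condition then follows from finiteness of sufficiently high moments of $\Delta W_k$ together with polynomial bounds on $\nabla^j\varphi$.

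For the remaining pieces, the boundary term $\sqrt\tau[\varphi(\bar X_N)-\varphi(\bar X_0)]$ is $O(\sqrt\tau)$ in $\mbf L^1$ by the polynomial growth of $\varphi$ (Lemma \ref{varphi}) together with uniform moment bounds on $\bar X_n$.  For $\sqrt\tau\sum_k\mcal R_k$, a careful expansion gives $\mbf E|\mcal R_k|\le K\tau^{2}\,\mrm{poly}\bigl(\sup_{n}\mbf E|\bar X_n|^{p}\bigr)$ for some sufficiently large $p$, so that $\mbf E\bigl|\sqrt\tau\sum_k\mcal R_k\bigr|\le\sqrt\tau\cdot N\cdot K\tau^{2}=K\sqrt\tau\to 0$, which yields the desired convergence in probability; an application of the Slutsky theorem then closes the argument.

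The principal obstacle is the uniform-in-$n$ boundedness of $\mbf E|\bar X_n^x|^p$ for $p>2$ under the super-linear growth of $b$, which, as the Introduction emphasizes, has not previously been established and is essential both for bounding $\mcal R_k$ (through the polynomial growth of $b,\nabla b,\ldots$ and $\nabla^j\varphi$) and for the ergodic averaging argument above.  I will prove it by raising the identity $\bar X_{n+1}-b(\bar X_{n+1})\tau=\bar X_n+\sigma(\bar X_n)\Delta W_n$ to the $p$th power, applying the dissipativity estimate \eqref{ubu} at $u=\bar X_{n+1}$ to extract a contractive factor of the form $1-c_1 p\tau+o(\tau)$, and exploiting the boundedness of $\sigma$ to obtain, for $\tau$ small enough, a geometric recursion $\mbf E|\bar X_{n+1}|^p\le(1-\kappa\tau)\mbf E|\bar X_n|^p+K\tau$ uniformly in $n$.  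Once this moment bound is available, the Poisson equation reformulation and the martingale CLT deliver Theorem \ref{CLT2}.
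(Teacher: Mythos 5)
Your overall strategy coincides with the paper's: the Poisson equation from Lemma \ref{varphi}, the telescoping/Taylor decomposition of $\varphi(\bar X_{k+1})-\varphi(\bar X_k)$ driven by the BEM increment $\Delta\bar X_k=b(\bar X_{k+1})\tau+\sigma(\bar X_k)\Delta W_k$, the martingale CLT for the leading term with conditional variance identified via the exponential mixing of the BEM chain and $|\pi_\tau(f)-\pi(f)|\le K\tau^{1/2}$, the $p$th moment bound of Theorem \ref{pth-moment} proved by a dissipative geometric recursion, and Slutsky. The only substantive difference is bookkeeping: you place all conditionally centered corrections inside the martingale array $\mcal M_k$ and argue their variances vanish, whereas the paper keeps them in the remainder $\mcal R_\tau$ and kills them by second-moment orthogonality; these are the same computations.

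There is, however, one genuine gap in your treatment of the remainder. You claim a blanket bound $\mbf E|\mcal R_k|\le K\tau^{2}$ so that $\mbf E|\sqrt\tau\sum_k\mcal R_k|\le K\sqrt\tau$. With $N=\tau^{-2}$ terms and the $\sqrt\tau$ prefactor, this requires every per-step remainder to be $o(\tau^{3/2})$ in $\mbf L^1$, and that fails for at least one term you explicitly assign to $\mcal R_k$: the drift--noise cross term $\tau\LL\nabla^2\varphi(\bar X_k),b(\bar X_{k+1})(\sigma(\bar X_k)\Delta W_k)^\top\RR_{HS}$ has $\mbf L^1$ size of order $\tau\cdot\tau^{1/2}=\tau^{3/2}$, so the naive summation gives $\sqrt\tau\cdot\tau^{-2}\cdot\tau^{3/2}=O(1)$, not $o(1)$. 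The paper's fix (its $R_{\tau,5}=R_{\tau,5}^1+R_{\tau,5}^2$) is to write $b(\bar X_{k+1})=\bigl(b(\bar X_{k+1})-b(\bar X_k)\bigr)+b(\bar X_k)$: the first piece gains an extra $\tau^{1/2}$ via \eqref{eq17}, while the second is an $\mcal F_{t_k}$-conditionally centered martingale difference whose sum is estimated in $\mbf L^2$ by orthogonality, giving $O(\tau^2)$. The same caveat applies to the cubic Taylor remainder: it is evaluated at the non-adapted point $\bar X_k+\theta\Delta\bar X_k$, so "the cubic term in $\sigma(\bar X_k)\Delta W_k$" is not conditionally centered until you perform a further expansion freezing $\nabla^3\varphi$ at $\bar X_k$ (the paper's split $R_{\tau,6}^4=R_{\tau,6}^{4,1}+R_{\tau,6}^{4,2}$). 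Once these borderline terms are either moved into the centered array or handled by second-moment arguments, your proof closes and matches the paper's.
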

As is pointed out in the introduction, the proof idea of Theorem \ref{CLT1} does apply to the case $\alpha=2$. Instead, we will use the Poisson equation $\mcal L \varphi=h-\pi(h)$ to give a good decomposition of $\Pi_{\tau,2}(h)$, on basis of which the CLT of $\Pi_{\tau,2}(h)$ can be established.
We postpone the proof of Theorem \ref{CLT2} to the next section.

\section{Proof of Theorem \ref{CLT2}}\label{Sec4}
\subsection{Auxiliary results}
Notice that \cite{Liuwei23} gives the second moment boundedness of the BEM method, i.e, Proposition \ref{BEM}(1). However, in order to give the CLT for $\Pi_{\tau,2}(h)$, the $p$th ($p>2$) moment boundedness in the infinite time  horizon is indispensable.  We also refer interested readers to \cite{trunEM19} for the $p$th ($p>2$) moment boundedness in the infinite time horizon  for the truncated Euler Maruyama method.
\begin{theo}\label{pth-moment}
	Suppose that Assumptions \ref{assum1}-\ref{assum2} hold. Then for any $r\ge 1$ and $\tau\le 1$,
	\begin{align}\label{moment}
	\sup_{n\ge0}\mbf E|\bar{X}^x_n|^{r}\le K(r)(1+|x|^{r}).
	\end{align}
\end{theo}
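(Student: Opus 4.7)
My strategy is to derive a quadratic one-step contraction for the BEM scheme, leverage it to obtain $2p$-th moment bounds by induction on $p\in\mbb N^+$ using a sharp conditional-moment estimate for the noise, and finally reduce the general real $r\ge 1$ case to even integers via Jensen's inequality. For the one-step estimate, I would rewrite \eqref{BEM} as $\bar{X}_{n+1}-\tau b(\bar{X}_{n+1})=Y_n$, where $Y_n:=\bar{X}_n+\sigma(\bar{X}_n)\DW_n$. Taking the inner product with $\bar{X}_{n+1}$, applying $2\LL \bar{X}_{n+1},Y_n\RR\le |\bar{X}_{n+1}|^2+|Y_n|^2$, and using the dissipativity \eqref{ubu} with $l_2=0$ applied to $\bar{X}_{n+1}$, one obtains the pathwise bound
\begin{equation*}
(1+c_1\tau)|\bar{X}_{n+1}|^2\le |Y_n|^2+c_0\tau,
\end{equation*}
with $c_0:=\tfrac{1}{c_1}|b(0)|^2$ independent of $\tau$.

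Raising to the $p$-th power and expanding binomially gives $(1+c_1\tau)^p|\bar{X}_{n+1}|^{2p}\le \sum_{k=0}^p\binom{p}{k}(c_0\tau)^k|Y_n|^{2(p-k)}$. The main technical lemma to prove is the conditional moment bound
\begin{equation*}
\mbf E[|Y_n|^{2m}\mid\mcal F_{t_n}]\le |\bar{X}_n|^{2m}+K_m\tau\sum_{j=0}^{m-1}|\bar{X}_n|^{2j},
\end{equation*}
which I would establish by multinomially expanding $(|\bar{X}_n|^2+2\LL\bar{X}_n,\sigma(\bar{X}_n)\DW_n\RR+|\sigma(\bar{X}_n)\DW_n|^2)^m$, noting that the odd-degree terms in $\DW_n$ have zero conditional expectation, and invoking the boundedness of $\sigma$ from Assumption \ref{assum1} together with standard Gaussian moment estimates. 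The crucial point is that the vanishing odd moments of $\DW_n$ suppress the \emph{a priori} $\mcal O(\sqrt\tau)$ correction and yield a genuine $\mcal O(\tau)$ remainder, which is exactly compatible with the $(1+c_1\tau)^p$ dissipation.

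Substituting back and using $\tau\le 1$ so that $(c_0\tau)^k\le c_0^k\tau$ for every $k\ge 1$, the $k\ge 1$ pieces in the binomial sum contribute only strictly lower-order monomials in $|\bar{X}_n|$ times $\tau$, and one obtains
\begin{equation*}
(1+c_1\tau)^p\mbf E[|\bar{X}_{n+1}|^{2p}\mid\mcal F_{t_n}]\le |\bar{X}_n|^{2p}+K_p\tau\sum_{j=0}^{p-1}|\bar{X}_n|^{2j}.
\end{equation*}
Bounding $(1+c_1\tau)^p\ge 1+pc_1\tau$, taking the unconditional expectation, and using the induction hypothesis (with base case $p=1$ supplied by Proposition \ref{BEMesti}(1)) to uniformly dominate the lower-order moments in terms of $|x|$, I arrive at the scalar recursion
\begin{equation*}
(1+pc_1\tau)\mbf E|\bar{X}^x_{n+1}|^{2p}\le \mbf E|\bar{X}^x_n|^{2p}+K_p\tau(1+|x|^{2(p-1)}),
\end{equation*}
which a discrete-Gronwall / geometric-series argument collapses to $\sup_{n\ge 0}\mbf E|\bar{X}^x_n|^{2p}\le K(p)(1+|x|^{2p})$. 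Jensen's inequality then extends the bound to arbitrary real $r\ge 1$. The main obstacle is securing the $\mcal O(\tau)$ — rather than $\mcal O(\sqrt\tau)$ — correction in the conditional moments of $|Y_n|^{2m}$; without exploiting the symmetry of the Gaussian increment, the dissipation factor $(1+c_1\tau)^p$ generated by the implicit drift would be too weak to absorb the noise contribution and the induction would not close.
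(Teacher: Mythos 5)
Your proposal is correct and follows essentially the same route as the paper: the pathwise one-step bound $(1+c_1\tau)|\bar X_{n+1}|^2\le |Y_n|^2+c_0\tau$ is identical to the paper's \eqref{eq1} (obtained there via the polarization identity rather than Young's inequality applied to $\LL \bar X_{n+1},Y_n\RR$), and the subsequent induction on $p$ — multinomial expansion, vanishing of the term linear in $\Delta W_n$ by conditioning, $\mcal O(\tau)$ control of all remaining terms with the $2(p-1)$-th moment hypothesis, then a geometric-series iteration against the factor $(1+pc_1\tau)$ — mirrors the paper's treatment of the terms $S_{n,i_1,i_2}$ and $T_{n,i}$ exactly. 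The only cosmetic differences are that you organize the noise into $Y_n$ and work with conditional rather than unconditional expectations, and that you invoke Proposition \ref{BEMesti}(1) for the base case instead of rederiving it.
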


\begin{proof}
	It is sufficient to show that for any positive integer $p$,
	\begin{align}\label{2pmoment}
		\sup_{n\ge0}\mbf E|\bar{X}^x_n|^{2p}\le K(p)(1+|x|^{2p}),
	\end{align}
 in view of the H\"older inequality, which will be derived  via  mathematical induction. 
	
	By \eqref{BEM} and \eqref{ubu},
	\begin{align}
	&\;|\bar{X}^x_{n+1}|^2-|\bar{X}^x_n|^2+|\bar{X}^x_{n+1}-\bar{X}^x_{n}|^2=2\LL \bar{X}^x_{n+1},\bar{X}^x_{n+1}-\bar{X}^x_{n}\RR\nonumber\\
	=&\;2\LL\bar{X}^x_{n+1},b(\bar{X}^x_{n+1})\RR\tau+2\LL\bar{X}^x_{n+1}-\bar{X}^x_{n},\sigma(\bar{X}^x_{n})\Delta W_n\RR+2\LL\bar{X}^x_{n},\sigma(\bar{X}^x_{n})\Delta W_n\RR\nonumber\\
	\le &\; -c_1\tau|\bar{X}^x_{n+1}|^2+K\tau+|\bar{X}^x_{n+1}-\bar{X}^x_{n}|^2+\|\sigma(\bar{X}^x_{n})\|^2_{HS}|\Delta W_n|^2+2\LL\bar{X}^x_{n},\sigma(\bar{X}^x_{n})\Delta W_n\RR,\label{Eq11}
	\end{align}
which together with the boundedness of $\sigma$ yields
\begin{align}\label{eq1}
	(1+c_1\tau)|\bar{X}^x_{n+1}|^2\le |\bar{X}^x_n|^2+K\tau+L_2^2|\Delta W_n|^2+2\LL\bar{X}^x_{n},\sigma(\bar{X}^x_{n})\Delta W_n\RR.
\end{align}	
Noting that $\mbf E\LL\bar{X}^x_{n},\sigma(\bar{X}^x_{n})\Delta W_n\RR=0$, we have
\begin{align*}
	\mbf E|\bar{X}^x_{n+1}|^2\le \frac{1}{1+c_1\tau}\mbf E|\bar{X}^x_n|^2+\frac{K\tau}{1+c_1\tau}.
\end{align*}	
By iteration, we arrive at 
\begin{align*}
	\mbf E|\bar{X}^x_n|^2\le \frac{1}{(1+c_1\tau)^n}|x|^2+K\tau\sum_{i=1}^\infty\frac{1}{(1+c_1\tau)^i}\le |x|^2+K.
\end{align*}
Thus, \eqref{2pmoment} holds for $p=1$. Now, we assume that 
\begin{align}\label{induction}
	\sup_{n\ge0}\mbf E|\bar{X}^x_n|^{2(p-1)}\le K(p)(1+|x|^{2(p-1)}),~p\ge 2.
\end{align}	
\noindent It remains to prove 	$\sup\limits_{n\ge0}\mbf E|\bar{X}^x_n|^{2p}\le K(p)(1+|x|^{2p})$.

In fact, using \eqref{eq1} and the inequality $(1+x)^\alpha\ge 1+\alpha x$, $\alpha\ge 1$, $x>-1$ leads to
\begin{align}\label{eq2}
	(1+pc_1\tau)|\bar{X}^x_{n+1}|^{2p}\le \big(|\bar{X}^x_{n}|^2+2\LL\bar{X}^x_{n},\sigma(\bar{X}^x_{n})\Delta W_n \RR+K(\tau+|\Delta W_n|^2)\big)^p.
\end{align}
Notice that
\begin{align*}
	&\;\big(|\bar{X}^x_{n}|^2+2\LL\bar{X}^x_{n},\sigma(\bar{X}^x_{n})\Delta W_n \RR+K(\tau+|\Delta W_n|^2)\big)^p\\
	=&\;\sum_{i_1=0}^{p}\sum_{i_2=0}^{p-i_1}C_p^{i_1}C_{p-i_1}^{i_2}2^{i_2}K^{p-(i_1+i_2)}|\bar{X}^x_n|^{2i_1}\LL\bar{X}^x_{n},\sigma(\bar{X}^x_{n})\Delta W_n \RR^{i_2}(\tau+|\Delta W_n|^2)^{p-(i_1+i_2)}\\
	=&\; |\bar{X}^x_n|^{2p}+\sum_{i_1=0}^{p-1}\sum_{i_2=0}^{p-i_1-1}C_p^{i_1}C_{p-i_1}^{i_2}2^{i_2}K^{p-(i_1+i_2)}S_{n,i_1,i_2}+\sum_{i=0}^{p-1}C_p^{i}2^{p-i}T_{n,i},
\end{align*}
where
\begin{gather*}
S_{n,i_1,i_2}:=|\bar{X}^x_n|^{2i_1}\LL\bar{X}^x_{n},\sigma(\bar{X}^x_{n})\Delta W_n \RR^{i_2}(\tau+|\Delta W_n|^2)^{p-(i_1+i_2)},~i_1\in[0,p-1],~i_2\in[0,p-i_1-1],\\
T_{n,i}:=|\bar{X}^x_n|^{2i}\LL\bar{X}^x_{n},\sigma(\bar{X}^x_{n})\Delta W_n \RR^{p-i},~i\in[0,p-1].
\end{gather*} 
For any $i_1\in[0,p-1],~i_2\in[0,p-i_1-1]$, it follows from  the independence of $\Delta W_n$ and $\bar{X}^x_n$,  the boundedness of $\sigma$, the H\"older inequality and \eqref{induction}  that for $\tau\le 1$,
\begin{align*}
	|\mbf ES_{n,i_1,i_2}|&\le K(p) \mbf E|\bar{X}^x_n|^{2i_1+i_2}\mbf E\big[|\Delta W_n|^{i_2}(\tau+|\Delta W_n|^2)^{p-(i_1+i_2)}\big]\\
	&\le K(p)\big(\mbf E|\bar{X}^x_n|^{2p-2}\big)^{\frac{2i_1+i_2}{2p-2}}\tau\le K(p)(1+|x|^{2p-2})\tau.
\end{align*}
Next we estimate $|\mbf ET_{n,i}|$ for $i=0,\ldots,p-1$. 
Notice that the property of conditional expectations (see, e.g., \cite[Lemma 2.6]{LiuMao15}) leads to
\begin{align*}
	\mbf ET_{n,p-1}&=\mbf E\big[\mbf E_n\big(|\bar{X}^x_n|^{2p-2}\LL\bar{X}^x_{n},\sigma(\bar{X}^x_{n})\Delta W_n \RR\big)\big]\\
	&=\mbf E\big[\big(\mbf E(|y|^{2p-2}\LL y,\sigma(y)\Delta W_n \RR)\big)\big|_{y=\bar{X}^x_n}\big]=0.
\end{align*}
For $i=0,\ldots,p-2$, applying \eqref{induction}, the boundedness of $\sigma$ and  the H\"older inequality, we get
\begin{align*}
	|\mbf ET_{n,i}|\le K(p)\mbf E|\bar{X}^x_n|^{p+i}\mbf E|\Delta W_n|^{p-i}\le K(p) \big(\mbf E|\bar{X}^x_n|^{2p-2})^{\frac{p+i}{2p-2}}\tau^{\frac{p-i}{2}}\le K(p)(1+|x|^{2p-2})\tau.
\end{align*}
Combining the above formulas gives
\begin{align*}
	\mbf E\big(|\bar{X}^x_{n}|^2+2\LL\bar{X}^x_{n},\sigma(\bar{X}^x_{n})\Delta W_n \RR+K(\tau+|\Delta W_n|^2)\big)^p\le\mbf E|\bar{X}^x_n|^{2p}+K(p)(1+|x|^{2p-2})\tau,
\end{align*}
which along with \eqref{eq2} yields
\begin{align}
	\mbf E|\bar{X}^x_{n+1}|^{2p}\le \frac{1}{1+pc_1\tau}\mbf E|\bar{X}^x_n|^{2p}+K(p)\frac{(1+|x|^{2p-2})\tau}{1+pc_1\tau}.\label{Eq12}
\end{align}
Then by iteration, we deduce 
\begin{align*}
	\mbf E|\bar{X}^x_{n}|^{2p}\le \frac{1}{(1+pc_1\tau)^n}|x|^{2p}+K(p)(1+|x|^{2p-2})\tau\sum_{i=1}^\infty \frac{1}{(1+pc_1\tau)^i}\le K(p)(1+|x|^{2p}).	
\end{align*}
Thus, \eqref{2pmoment} holds by mathematical induction and the proof is complete.
\end{proof}

\begin{pro}\label{BEMergodic}
	Let Assumptions \ref{assum1}-\ref{assum2} hold and $\tau$ be sufficiently small. Then  the BEM method \eqref{BEM} admits	a unique invariant measure $\pi_\tau\in\mcal P(\mbb R^d)$. Moreover,
	for any $f\in Poly(l,\mbb R^d)$, $l\ge1$ and $n\ge 0$,
	\begin{align}
		\big|\mbf Ef(\bar{X}^x_n)-\pi_\tau(f)\big|&\le K(f)(1+|x|^l)e^{-\xi_1n\tau},\quad x\in\mbb R^d,~n\ge 0, \label{pitau}\\
		|\pi_\tau(f)-\pi(f)|&\le K(f)\tau^{\frac12}.\label{pitaupi}
	\end{align}
\end{pro}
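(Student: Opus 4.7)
The plan is to proceed in three stages, mirroring the structure of Proposition \ref{Xtergodic} but adapted to the discrete-time Markov chain generated by \eqref{BEM}.

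First, I would establish existence and uniqueness of $\pi_\tau$ by a $2$-Wasserstein contraction argument. The exponential $\mbf L^2$-estimate of Proposition \ref{BEMesti}(2), combined with the uniform second-moment bound of Proposition \ref{BEMesti}(1), implies that, for $n$ large enough, the $n$-step transition kernel is a strict contraction on the complete metric space of probability measures on $\mbb R^d$ with finite second moment, equipped with $W_2$ (the synchronous coupling driven by the same Brownian increments, started from an optimal coupling of the initial distributions, turns the pointwise contraction into a $W_2$ one). Banach's fixed-point theorem then yields a unique invariant measure $\pi_\tau$. The finiteness of polynomial moments $\pi_\tau(|\cdot|^p)<+\infty$ for every $p\ge 1$ follows from Theorem \ref{pth-moment} and Fatou's lemma exactly as in the proof of Proposition \ref{Xtergodic}(2).

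Second, to prove \eqref{pitau}, I would use the invariance of $\pi_\tau$ to write
\begin{align*}
\mbf E f(\bar X^x_n)-\pi_\tau(f)=\int_{\mbb R^d}\big(\mbf E f(\bar X^x_n)-\mbf E f(\bar X^y_n)\big)\pi_\tau(\mrm d y),
\end{align*}
and then bound the integrand using the polynomial growth of $f\in Poly(l,\mbb R^d)$, the H\"older inequality, the uniform moment estimate of Theorem \ref{pth-moment}, the finiteness of $\pi_\tau(|\cdot|^{2(l-1)})$ from Step 1, and the exponential $\mbf L^2$-contraction of Proposition \ref{BEMesti}(2). This produces the bound $K(f)(1+|x|^l)e^{-\xi_1 n\tau}$, in direct analogy with the argument carried out for the exact solution in the proof of Proposition \ref{Xtergodic}(3).

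Third, to prove \eqref{pitaupi}, I would insert $\mbf E f(\bar X^0_n)$ and $\mbf E f(X^0(t_n))$ and use the triangle inequality
\begin{align*}
|\pi_\tau(f)-\pi(f)|\le |\pi_\tau(f)-\mbf E f(\bar X^0_n)|+|\mbf E f(\bar X^0_n)-\mbf E f(X^0(t_n))|+|\mbf E f(X^0(t_n))-\pi(f)|.
\end{align*}
The first and third terms decay exponentially in $n\tau$ by Step 2 and Proposition \ref{Xtergodic}(3) respectively. The middle term is where the strong order of the scheme enters: combining the polynomial growth of $f$, the Cauchy--Schwarz inequality, the uniform moment bounds of Theorem \ref{pth-moment} and Proposition \ref{Xtesti}(1), and the uniform strong error $\sup_n\mbf E|X^0(t_n)-\bar X^0_n|^2\le K\tau$ from Proposition \ref{BEMesti}(3), it is bounded by $K(f)\tau^{1/2}$ uniformly in $n$. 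Choosing $n=n(\tau)$ with $n\tau$ of order $|\ln\tau|$ ensures that both exponential contributions are of order $\tau^{1/2}$, and \eqref{pitaupi} follows. The delicate points I expect to encounter are the careful $W_2$-setup in Step 1 and the bookkeeping of constants in Step 3, making sure that no constant silently picks up a $\tau$-dependence; once these are in hand, the remaining estimates are routine adaptations of arguments already used for the exact solution.
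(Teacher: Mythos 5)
Your proposal is correct and follows essentially the same route as the paper: \eqref{pitau} is obtained by writing $\mbf Ef(\bar X^x_n)-\pi_\tau(f)$ as an integral of $\mbf Ef(\bar X^x_n)-\mbf Ef(\bar X^y_n)$ against $\pi_\tau(\ud y)$ and invoking Proposition \ref{BEMesti}(2) together with the moment bounds (mirroring the proof of \eqref{pif}), and \eqref{pitaupi} is the same three-term triangle inequality through $\mbf Ef(\bar X^0_n)$ and $\mbf Ef(X^0(t_n))$, with the paper simply letting $n\to\infty$ where you choose $n\tau\sim|\ln\tau|$. The only divergence is that you supply a self-contained $W_2$-contraction/Banach fixed-point argument for existence and uniqueness of $\pi_\tau$, whereas the paper outsources this to \cite[Theorem 4.1]{Liuwei23}; your argument is a standard and valid substitute.
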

\begin{proof}
	As is shown in \cite[Theorem 4.1]{Liuwei23}, $\{\bar{X}_n\}_{n\ge0}$ admits a unique invariant measure $\pi_\tau$, and     $\bar{X}_{n}^x\overset{d}{\longrightarrow}\pi_\tau$ for any $x\in\mbb R^d$. Similar to the proof of \eqref{pif}, one can derive \eqref{pitau} based on Proposition \ref{BEMesti}(2) and Theorem \ref{pth-moment}. As for \eqref{pitaupi}, it follows from $f\in Poly(l,\mbb R^d)$,  \eqref{pitau}, Theorem \ref{pth-moment}, Proposition \ref{Xtesti}(1), Proposition \ref{BEMesti}(3) and \eqref{pif}  that for any $n\ge 0$ and $\tau\ll 1$,
	\begin{align*}
		&|\pi_\tau(f)-\pi(f)|\le |\pi_\tau(f)-\mbf Ef(\bar{X}^0_n)|+|\mbf Ef(\bar{X}^0_n)-\mbf Ef(X^0(t_n))|+|\mbf Ef(X^0(t_n))-\pi(f)|\\
		\le& K(f)e^{-\xi_1n\tau}+K(f)(1+(\mbf E|\bar X^0_n|^{2l-2})^{\frac12}+(\mbf E|X^0(t_n)|^{2l-2})^{\frac12}) (\mbf E|\bar X^0_n-X^0(t_n)|^2)^{\frac12}+K(f)e^{-\lambda_1 t_n}\\
		\le &K(f)(e^{-\xi_1n\tau}+e^{-\lambda_1 t_n})+K(f)\tau^{\frac12}.
	\end{align*}
Letting $n\to\infty$ in the above formula yields \eqref{pitaupi}, which finishes the proof.
\end{proof}

In order to prove the  CLT for $\Pi_{\tau,2}(h)$, we need  to give the regularity of $\varphi$. This can be done through a probabilistic approach by means of  mean-square derivatives of $\{X^x(t)\}_{t\ge0}$ w.r.t. the initial value $x$.
For any $x,y_i\in\mbb R^d$, $i=1,2,3,4$, denote by $\eta^x_{y_1}(t)$ the mean-square derivative of $X^x(t)$ along with the direction $y_1$, i.e., $\eta^x_{y_1}(t)=\lim\limits_{\varepsilon\to0}\frac{1}{\varepsilon}(X^{x+\varepsilon y_1}(t)-X^x(t))$ in $\mbf L^2(\Omega;\mbb R^d)$. Further, denote $\eta^x_{y_1,y_2}(t):=\lim\limits_{\varepsilon\to0}\frac{1}{\varepsilon}(\eta^{x+\varepsilon y_2}_{y_1}(t)-\eta^x_{y_1}(t))$ in $\mbf L^2(\Omega;\mbb R^d)$, i.e., $\eta^x_{y_1,y_2}(t)$ is the second mean-square derivative of $X^x(t)$ along with the direction $y_1$ and $y_2$. $\eta^x_{y_1,y_2,y_3}(t)$ and $\eta^x_{y_1,y_2,y_3,y_4}(t)$ are defined similarly. 
We refer readers to  \cite{meandiff01,WangZhao} for more details about the  mean-square  differentiablity of SDEs w.r.t. initial values.

\begin{lem}\label{uestimate}
Suppose that Assumptions \ref{assum1}-\ref{assum3} hold. Then there exist $C_1,C_2>0$ and $\kappa_i>0$, $i=1,2,3$ such that for any $x,y_i\in\mbb R^d$, $i=1,2,3,4$ and $t\ge 0$,
\begin{align}
 (\mbf E|\eta^x_{y_1}(t)|^{16+\kappa_1})^{\frac{1}{16+\kappa_1}}&\le C_1|y_1|e^{-C_2t},\label{eta1}\\
  (\mbf E|\eta^x_{y_1,y_2}(t)|^{8+\kappa_2})^{\frac{1}{8+\kappa_2}}&\le C_1(1+|x|^{q'})|y_1||y_2|e^{-C_2t},\label{eta2}\\
  (\mbf E|\eta^x_{y_1,y_2,y_3}(t)|^{4+\kappa_3})^{\frac{1}{4+\kappa_3}}&\le C_1(1+|x|^{2q'})|y_1||y_2||y_3|e^{-C_2t},\label{eta3}\\
  (\mbf E|\eta^x_{y_1,y_2,y_3,y_4}(t)|^{2})^{\frac{1}{2}}&\le C_1(1+|x|^{3q'})|y_1||y_2||y_3||y_4|e^{-C_2t}\label{eta4}.
\end{align}
\end{lem}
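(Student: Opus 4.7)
The plan is to exploit the fact that each $\eta^x_{y_1,\ldots,y_k}(t)$ satisfies a linear (inhomogeneous, for $k\ge2$) SDE obtained by formally differentiating \eqref{SDE} $k$ times in the initial value, and then apply It\^o's formula to $|\eta^x_{y_1,\ldots,y_k}(t)|^{p_k}$ with $p_1=16+\kappa_1$, $p_2=8+\kappa_2$, $p_3=4+\kappa_3$, $p_4=2$. For $k=1$ one has the homogeneous linear SDE
\begin{align*}
	\ud\eta^x_{y_1}(t)=\nabla b(X^x(t))\eta^x_{y_1}(t)\,\ud t+\nabla\sigma(X^x(t))\eta^x_{y_1}(t)\,\ud W(t),\qquad \eta^x_{y_1}(0)=y_1.
\end{align*}
Using the pointwise estimate $\sum_{j=1}^D\LL v,\nabla\sigma_j(\cdot)v\RR^2\le|v|^2\|\nabla\sigma(\cdot)v\|_{HS}^2$ together with $\|\nabla\sigma(\cdot)v\|_{HS}\le L_1|v|$ (Assumption~\ref{assum1}), It\^o's formula controls the drift of $|\eta^x_{y_1}|^{p_1}$ by
\begin{align*}
	\tfrac{p_1}{2}|\eta^x_{y_1}|^{p_1-2}\bigl[2\LL\eta^x_{y_1},\nabla b(X^x)\eta^x_{y_1}\RR+(p_1-1)\|\nabla\sigma(X^x)\eta^x_{y_1}\|_{HS}^2\bigr]\,\ud t.
\end{align*}
Splitting $p_1-1=15+\kappa_1$ and invoking \eqref{couple2}, the bracket is at most $-(L_5-\kappa_1 L_1^2)|\eta^x_{y_1}|^2$, which is strictly negative once $\kappa_1\in(0,L_5/L_1^2)$. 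Taking expectations and applying Gronwall's lemma delivers \eqref{eta1}.

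The higher-order bounds then follow by induction on $k$. For $k\ge2$, $\eta^x_{y_1,\ldots,y_k}$ satisfies
\begin{align*}
	\ud\eta^x_{y_1,\ldots,y_k}(t)=\bigl[\nabla b(X^x)\eta^x_{y_1,\ldots,y_k}+F_k\bigr]\,\ud t+\bigl[\nabla\sigma(X^x)\eta^x_{y_1,\ldots,y_k}+G_k\bigr]\,\ud W(t),\qquad \eta^x_{y_1,\ldots,y_k}(0)=0,
\end{align*}
where $F_k,G_k$ are Fa\`a di Bruno sums of terms of the form $(\nabla^i b)(X^x)$ or $(\nabla^i\sigma)(X^x)$, $2\le i\le k$, acting on collections of strictly lower-order mean-square derivatives. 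Applying It\^o's formula to $|\eta^x_{y_1,\ldots,y_k}|^{p_k}$ and using Young's inequality to absorb the $F_k,G_k$ cross contributions into the dissipation produced by \eqref{couple2} (which now applies with strict slack since $p_k-1\le 15$) yields
\begin{align*}
	\tfrac{\ud}{\ud t}\mbf E|\eta^x_{y_1,\ldots,y_k}|^{p_k}\le-c_k\mbf E|\eta^x_{y_1,\ldots,y_k}|^{p_k}+K\bigl(\mbf E|F_k|^{p_k}+\mbf E\|G_k\|_{HS}^{p_k}\bigr)
\end{align*}
for some $c_k>0$. The residual source is estimated by combining Assumption~\ref{assum3} (polynomial growth of $\nabla^i b$ and boundedness of $\nabla^i\sigma$), Proposition~\ref{Xtesti}(1) (uniform-in-time boundedness of every polynomial moment of $X^x$), and H\"older's inequality against the inductive estimates, producing a residual of the form $R_k(t)\le K(1+|x|^{(k-1)q'})^{p_k}|y_1|^{p_k}\cdots|y_k|^{p_k}e^{-\beta_k t}$; Gronwall's lemma closes the induction.

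The main obstacle is the exponent bookkeeping. When a term of $F_k$ is a product of $m$ lower-order derivatives, bounding its $p_k$-th moment via H\"older---with a near-zero H\"older weight on the $X^x$-factor (permissible because all polynomial moments of $X^x$ are uniformly controlled in time) and weight slightly larger than $1/m$ on each $\eta^x_{(\cdot)_j}$-factor---requires that factor to be available in a moment strictly greater than $mp_k$. The sequence $16+\kappa_1,\,8+\kappa_2,\,4+\kappa_3,\,2$ is chosen precisely so that $mp_k<p_{k-1}$ holds for every Fa\`a di Bruno term that actually arises at level $k$: the term $\nabla^2b(X^x)(\eta^x_{y_1},\eta^x_{y_2})$ at level $2$ forces $\kappa_2<\kappa_1/2$; the term $\nabla^2b(X^x)(\eta^x_{y_1,y_2},\eta^x_{y_3})$ at level $3$ forces $\kappa_3<\kappa_2/2$; level $4$ with $p_4=2$ imposes no further constraint. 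Picking $\kappa_1,\kappa_2,\kappa_3>0$ to satisfy this chain together with $\kappa_1<L_5/L_1^2$ ensures the argument closes, and the accumulation of one factor of $(1+|x|^{q'})$ per level---one from the $\nabla^i b$ or $\nabla^i\sigma$ evaluated at $X^x$, and one inherited from each appearing lower-order derivative---accounts for the $(1+|x|^{(k-1)q'})$ prefactors in \eqref{eta2}--\eqref{eta4}.
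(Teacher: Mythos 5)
Your proposal is correct and follows essentially the same route as the paper: write the variational equations, apply It\^o's formula to $|\eta|^{p_k}$ with the halving exponents $16+\kappa_1,\,8+\kappa_2,\,4+\kappa_3,\,2$, use \eqref{couple2} for the dissipation (your constraint $\kappa_1<L_5/L_1^2$ matches the paper's $\kappa_1<L_5/L_\sigma^2$), absorb the Fa\`a di Bruno source terms via Young and H\"older against the lower-order bounds, and close with a Gronwall/exponential-weight argument. The exponent bookkeeping you describe ($\kappa_2<\kappa_1/2$, $\kappa_3<\kappa_2/2$) is exactly the constraint the paper imposes by taking $\kappa$ and $\varepsilon'$ sufficiently small in its H\"older splittings.
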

\begin{proof}
Similarly to \cite[Section 1.3.3]{meandiff01},  $\eta^x_{y_1}$ solves the following variational equation
\begin{align*}
	\begin{cases}
		\ud \eta^x_{y_1}(t)=\nabla b(X^x(t))\eta^x_{y_1}(t)\ud t+\nabla \sigma (X^x(t))\eta^x_{y_1}(t)\ud W(t),\\
		\eta^x_{y_1}(0)=y_1.
	\end{cases}
\end{align*} 
	Notice that for any $p\ge2$ and matrix $A$, it holds that $\nabla (|x|^p)=p|x|^{p-2}x$ and
\begin{align}\label{trace}
	\frac{1}{2}\text{trace}\big(\nabla ^2(|x|^p)AA^\top\big)\le\frac{1}{2}p(p-1)|x|^{p-2}\|A\|_{HS}^2.
\end{align}
For any $\kappa\in(0,1)$ and $\lambda>0$, by the It\^o formula, \eqref{trace}, $\sigma\in\mbf C_b^4(\mbb R^d)$ and \eqref{couple2},
\begin{align*}
	&\;\mbf E\big(e^{\lambda t}|\eta^x_{y_1}(t)|^{16+\kappa}\big)\\
	\le&\; |y_1|^{16+\kappa}+\lambda\int_0^te^{\lambda s}|\eta_{y_1}^x(s)|^{16+\kappa}\ud s\\
	&\;+\frac{1}{2}(16+\kappa)\mbf E\int_0^te^{\lambda s}|\eta^x_{y_1}(s)|^{14+\kappa}\Big[2\LL \eta^x_{y_1}(s),\nabla b(X^x(s))\eta^x_{y_1}(s)\RR	+(15+\kappa)\|\nabla \sigma(X^x(s))\eta^x_{y_1}(s)\|^2_{HS}\Big]\\
\le &\;|y_1|^{16+\kappa}+\big[\lambda+(8+\frac{\kappa}{2})(-L_5+\kappa L^2_\sigma)\big]\int_0^t\mbf E|\eta^x_{y_1}(s)|^{16+\kappa}\ud s,
\end{align*}
where $L_\sigma:=\sup\limits_{x\in\mbb R^d}\|\nabla \sigma(x)\|_{\otimes}$.
Letting $\kappa_1<L_5/L^2_\sigma$, $\lambda_1$ small enough, we obtain
\begin{align*}
	\mbf E|\eta^x_{y_1}(t)|^{16+\kappa_1}\le |y_1|^{16+\kappa_1}e^{-\lambda_1 t}\quad \forall ~t\in[0,T],
\end{align*}
which yields the \eqref{eta1}.

Secondly, similar to the argument for $\eta^x_{y_1}$, we have 
\begin{align*}
	\left\{\begin{array}{ll}
		\ud \eta^x_{y_1,y_2}(t)=&\nabla b(X^x(t))\eta^x_{y_1,y_2}(t)\ud t+\nabla^2 b(X^x(t))(\eta^x_{y_1}(t),\eta^x_{y_2}(t))\ud t\\
		&+\nabla \sigma (X^x(t))\eta^x_{y_1,y_2}(t)\ud W(t)+\nabla^2 \sigma(X^x(t))(\eta^x_{y_1}(t),\eta^x_{y_2}(t))\ud W(t),\\
		\eta^x_{y_1,y_2}(0)=&0.
	\end{array}
\right.
\end{align*} 
For any $\kappa,\lambda,\varepsilon_0\in(0,1)$, again by the It\^o formula, \eqref{trace}, $\sigma\in\mbf C_b^4(\mbb R^d)$ and the elementary  inequality $(a+b)^2\le (1+\varepsilon_0)a^2+(1+1/\varepsilon_0)b^2$ with $a,b\ge 0$, it holds that
\begin{align*}
	&\;\mbf E\big(e^{\lambda t}|\eta^x_{y_1,y_2}(t)|^{8+\kappa}\big)\\
	\le&\;\lambda\mbf E\int_0^t e^{\lambda s}|\eta^x_{y_1,y_2}(s)|^{8+\kappa}\ud s+(8+\kappa)\mbf E\int_0^te^{\lambda s}|\eta^x_{y_1,y_2}(s)|^{6+\kappa}\LL \eta^x_{y_1,y_2}(s),\nabla b(X^x(s))\eta^x_{y_1,y_2}(s)\RR\ud s\\
	&\;+(8+\kappa)\mbf E\int_0^te^{\lambda s}|\eta^x_{y_1,y_2}(s)|^{6+\kappa}\LL \eta^x_{y_1,y_2}(s),\nabla^2 b(X^x(s))(\eta^x_{y_1}(s),\eta^x_{y_2}(s))\RR\ud s\\
	&\;+\frac{1}{2}(8+\kappa)(7+\kappa)\mbf E\int_0^te^{\lambda s}|\eta^x_{y_1,y_2}(s)|^{6+\kappa}\|\nabla \sigma (X^x(s))\eta^x_{y_1,y_2}(s)+\nabla^2 \sigma(X^x(s))(\eta^x_{y_1}(s),\eta^x_{y_2}(s))\|^2_{HS}\ud s\\
	\le &\;\lambda\mbf E\int_0^t e^{\lambda s}|\eta^x_{y_1,y_2}(s)|^{8+\kappa}\ud s+\frac{1}{2}(8+\kappa)\mbf E\int_0^t	e^{\lambda s}|\eta^x_{y_1,y_2}(s)|^{6+\kappa}\Big[2\LL \eta^x_{y_1,y_2}(s),\nabla b(X^x(s))\eta^x_{y_1,y_2}(s)\RR\\
	&\;\qquad\qquad\qquad+(7+\kappa)(1+\varepsilon_0)\|\nabla \sigma (X^x(s))\eta^x_{y_1,y_2}(s)\|_{HS}^2\Big]\ud s\\
	&\;+K(\kappa)\mbf E\int_0^te^{\lambda s}|\eta^x_{y_1,y_2}(s)|^{7+\kappa}\|\nabla^2 b(X^x(s))\|_{\otimes}|\eta^x_{y_1}(s)||\eta^x_{y_2}(s)|\ud s\\
	&\;+K(\kappa,\varepsilon_0)\mbf E\int_0^te^{\lambda s}|\eta^x_{y_1,y_2}(s)|^{6+\kappa}|\eta^x_{y_1}(s)|^2|\eta^x_{y_2}(s)|^2\ud s.
\end{align*}
Further, taking  $\varepsilon_0\ll 1$ and  using \eqref{couple2}, we get
\begin{align}
	&\;\mbf E\big(e^{\lambda t}|\eta^x_{y_1,y_2}(t)|^{8+\kappa}\big)\nonumber\\
	\le&\; \big(\lambda-(4+\frac{\kappa}{2})L_5\big)\mbf E\int_0^te^{\lambda s}|\eta^x_{y_1,y_2}(s)|^{8+\kappa}\ud s+K(\kappa)\mbf E\int_0^te^{\lambda s}|\eta^x_{y_1,y_2}(s)|^{7+\kappa}\|\nabla^2 b(X^x(s))\|_{\otimes}|\eta^x_{y_1}(s)||\eta^x_{y_2}(s)|\ud s\nonumber\\
	&\;+K(\kappa)\mbf E\int_0^te^{\lambda s}|\eta^x_{y_1,y_2}(s)|^{6+\kappa}|\eta^x_{y_1}(s)|^2|\eta^x_{y_2}(s)|^2\ud s.\label{eq3}
\end{align}
It follows from the Young inequality $ab\le \varepsilon a^p+K(\varepsilon)b^q$ with $a,b\ge 0$, $\frac{1}{p}+\frac{1}{q}=1$, $p,q>1$ and  the H\"older inequality that for any $\varepsilon,\varepsilon'>0$,
\begin{align*}
&\;\mbf E	\big(|\eta^x_{y_1,y_2}(s)|^{7+\kappa}\|\nabla^2 b(X^x(s))\|_{\otimes}|\eta^x_{y_1}(s)||\eta^x_{y_2}(s)|\big)\nonumber\\
\le&\; \varepsilon \mbf E|\eta^x_{y_1,y_2}(s)|^{8+\kappa}+K(\varepsilon)\mbf E\Big[\big(\|\nabla^2 b(X^x(s))\|_{\otimes}|\eta^x_{y_1}(s)||\eta^x_{y_2}(s)|\big)^{8+\kappa}\Big]\nonumber\\
\le&\;\varepsilon \mbf E|\eta^x_{y_1,y_2}(s)|^{8+\kappa}+K(\varepsilon) \big(\mbf E|\eta^x_{y_1}(s)|^{(8+\kappa)(2+\varepsilon')}\big)^{\frac{1}{2+\varepsilon'}}\big(\mbf E|\eta^x_{y_2}(s)|^{(8+\kappa)(2+\varepsilon')}\big)^{\frac{1}{2+\varepsilon'}}\nonumber\\
&\;\cdot\big(\mbf E\|\nabla^2 b(X^x(s))\|_{\otimes}^{(8+\kappa)(1+2/\varepsilon')}\big)^{\frac{\varepsilon'}{2+\varepsilon'}}.
\end{align*} 
Taking sufficiently small $\kappa$ and $\varepsilon'$, from  Assumption \ref{assum3}, Proposition \ref{Xtesti}(1) and \eqref{eta1} it follows that for any $\varepsilon>0$,
\begin{align}
	&\;\mbf E	\big(|\eta^x_{y_1,y_2}(s)|^{7+\kappa}\|\nabla^2 b(X^x(s))\|_{\otimes}|\eta^x_{y_1}(s)||\eta^x_{y_2}(s)|\big)\nonumber\\
	\le&\; \varepsilon
	 \mbf E|\eta^x_{y_1,y_2}(s)|^{8+\kappa}+K(\varepsilon)(1+|x|^{(8+\kappa)q'})|y_1|^{8+\kappa}|y_2|^{8+\kappa}e^{-Ks}.\label{eq4}
\end{align}
Similarity, for any $\varepsilon>0$,
\begin{align}
	&\;\mbf E\big(|\eta^x_{y_1,y_2}(s)|^{6+\kappa}|\eta^x_{y_1}(s)|^2|\eta^x_{y_2}(s)|^2\big) \nonumber\\
	\le&\;\varepsilon \mbf E|\eta^x_{y_1,y_2}(s)|^{8+\kappa}+K(\varepsilon)\big(\mbf E|\eta^x_{y_1}(s)|^{2(8+\kappa)}\big)^{\frac{1}{2}}\big(\mbf E|\eta^x_{y_2}(s)|^{2(8+\kappa)}\big)^{\frac{1}{2}}\nonumber\\
	\le&\;\varepsilon \mbf E|\eta^x_{y_1,y_2}(s)|^{8+\kappa}+K(\varepsilon)|y_1|^{8+\kappa}|y_2|^{8+\kappa}e^{-Ks}. \label{eq5}
\end{align}
Plugging \eqref{eq4}-\eqref{eq5} into \eqref{eq3},  and taking sufficiently small $\kappa_2$, $\lambda_2$ and $\varepsilon$, one has
\begin{align*}
	\mbf E\big(e^{\lambda_2t}|\eta^x_{y_1,y_2}(t)|^{8+\kappa_2}\big)\le -K\mbf E\int_0^te^{\lambda_2s}|\eta^x_{y_1,y_2}(s)|^{8+\kappa_2}\ud s+K(1+|x|^{(8+\kappa_2)q'})|y_1|^{8+\kappa}|y_2|^{8+\kappa},
\end{align*} 
which produces \eqref{eta2}.

Further, $\eta^{x}_{y_1,y_2,y_3}$ solves the following SDE
 \begin{align*}
 	\begin{cases}
 		\ud \eta^x_{y_1,y_2,y_3}(t)=&\nabla b(X^x(t))\eta^x_{y_1,y_2,y_3}(t)\ud t+\nabla^2 b(X^x(t))(\eta^x_{y_1}(t),\eta^x_{y_2,y_3}(t))\ud t\\
 		&+\nabla^2 b(X^x(t))(\eta^x_{y_2}(t),\eta^x_{y_1,y_3}(t))\ud t+\nabla^2 b(X^x(t))(\eta^x_{y_3}(t),\eta^x_{y_1,y_2}(t))\ud t\\
 		&+\nabla^3 b(X^x(t))(\eta^x_{y_1}(t),\eta^x_{y_2}(t),\eta^x_{y_3}(t))\ud t+
 		\nabla \sigma(X^x(t))\eta^x_{y_1,y_2,y_3}(t)\ud W(t)\\
 		&+\nabla^2 \sigma(X^x(t))(\eta^x_{y_1}(t),\eta^x_{y_2,y_3}(t))\ud W(t)
 		+\nabla^2 \sigma(X^x(t))(\eta^x_{y_2}(t),\eta^x_{y_1,y_3}(t))\ud W(t)\\
 		&+\nabla^2 \sigma(X^x(t))(\eta^x_{y_3}(t),\eta^x_{y_1,y_2}(t))\ud W(t)
 		+\nabla^3\sigma (X^x(t))(\eta^x_{y_1}(t),\eta^x_{y_2}(t),\eta^x_{y_3}(t))\ud W(t),\\
 		\eta^x_{y_1,y_2,y_3}(0)=&0.
 	\end{cases}
 \end{align*}  
By the same argument for deriving \eqref{eq4}, using It\^o formula, \eqref{couple2} and $\sigma\in\mbf C^4_b(\mbb R^d)$, we have that for any $\kappa,\lambda\in(0,1)$,
\begin{align}
	&\;\mbf E\big(e^{\lambda t}|\eta^x_{y_1,y_2,y_3}(t)|^{4+\kappa}\big)\nonumber\\
	\le&\;\big(\lambda-(2+\frac{\kappa}{2})L_5\big)\mbf E\int_0^te^{\lambda s}|\eta^x_{y_1,y_2,y_3}(s)|^{4+\kappa}\ud s\nonumber\\
	+&\;K(\kappa)\mbf E\int_0^te^{\lambda s}|\eta^x_{y_1,y_2,y_3}(s)|^{3+\kappa}\|\nabla^2 b(X^x(s))\|_{\otimes}\big(|\eta^x_{y_1}(s)||\eta^x_{y_2,y_3}(s)|+|\eta^x_{y_2}(s)||\eta^x_{y_1,y_3}(s)|+|\eta^x_{y_3}(s)||\eta^x_{y_1,y_2}(s)|\big)\ud s\nonumber\\
	+&\;K(\kappa)\mbf E\int_0^te^{\lambda s}|\eta^x_{y_1,y_2,y_3}(s)|^{3+\kappa}\|\nabla^3 b(X^x(s))\|_{\otimes}|\eta^x_{y_1}(s)||\eta^x_{y_2}(s)||\eta^x_{y_3}(s)|\ud s\nonumber\\
	+&\;K(\kappa)\mbf E\int_0^te^{\lambda s}|\eta^x_{y_1,y_2,y_3}(s)|^{2+\kappa}\Big(|\eta^x_{y_1}(s)|^2|\eta^x_{y_2,y_3}(s)|^2+|\eta^x_{y_2}(s)|^2|\eta^x_{y_1,y_3}(s)|^2\nonumber\\
	&\;\qquad\qquad\qquad+|\eta^x_{y_3}(s)|^2|\eta^x_{y_1,y_3}(s)|^2+|\eta^x_{y_1}(s)|^2|\eta^x_{y_2}(s)|^2|\eta^x_{y_3}(s)|^2\Big)\ud s.\nonumber\\
	=:&\;\big(\lambda-(2+\frac{\kappa}{2})L_5\big)\mbf E\int_0^te^{\lambda s}|\eta^x_{y_1,y_2,y_3}(s)|^{4+\kappa}\ud s+I_1(t)+I_2(t)+I_3(t).\label{eq6}
\end{align}
It follows from the Young inequality, H\"older inequality, \eqref{eta1}-\eqref{eta2}, Assumption \ref{assum3} and Proposition \ref{Xtesti}(1)  that for sufficiently small $\kappa,\varepsilon,\varepsilon'$,
\begin{align*}
	&\;\mbf E\big(|\eta^x_{y_1,y_2,y_3}(s)|^{3+\kappa}\|\nabla^2 b(X^x(s))\|_{\otimes}|\eta^x_{y_{\chi(1)}}(s)||\eta^x_{y_{\chi(2)},y_{\chi(3)}}(s)|\big)\\
	\le &\;\varepsilon\mbf E|\eta^x_{y_1,y_2,y_3}(s)|^{4+\kappa}+K(\varepsilon)\big(\mbf E|\eta^x_{y_{\chi(1)}}(s)|^{(4+\kappa)(2+\varepsilon')}\big)^{\frac{1}{2+\varepsilon'}}\big(\mbf E|\eta^x_{y_{\chi(2),\chi(3)}}(s)|^{(4+\kappa)(2+\varepsilon')}\big)^{\frac{1}{2+\varepsilon'}}\\
	&\;\cdot \big(\mbf E\|\nabla^2 b(X^x(s))\|^{(4+\kappa)(1+2/\varepsilon')}_{\otimes}\big)^{\frac{\varepsilon'}{2+\varepsilon'}}\\
	\le &\; \varepsilon\mbf E|\eta^x_{y_1,y_2,y_3}(s)|^{4+\kappa}+K(\varepsilon)(1+|x|^{2q'(4+\kappa)})\big(|y_1||y_2||y_3|\big)^{4+\kappa}e^{-Ks},
\end{align*}
where $(\chi(1),\chi(2),\chi(3))$ is any permutation of $(1,2,3)$. Thus, for $\kappa,\lambda,\varepsilon\ll1$,
\begin{align}\label{eq7}
	I_1(t)\le K(\kappa)\varepsilon\mbf E\int_0^te^{\lambda s}|\eta^x_{y_1,y_2,y_3}(s)|^{4+\kappa}\ud s+K(\kappa,\varepsilon)(1+|x|^{2q'(4+\kappa)})\big(|y_1||y_2||y_3|\big)^{4+\kappa}.
\end{align}
Similarly, it can be verified that for $\kappa,\lambda,\varepsilon\ll1$,
\begin{gather}
	I_2(t)\le K\varepsilon\mbf E\int_0^te^{\lambda s}|\eta^x_{y_1,y_2,y_3}(s)|^{4+\kappa}\ud s+K(\varepsilon)(1+|x|^{q'(4+\kappa)})\big(|y_1||y_2||y_3|\big)^{4+\kappa},\label{eq8}\\
	I_3(t)\le K\varepsilon\mbf E\int_0^te^{\lambda s}|\eta^x_{y_1,y_2,y_3}(s)|^{4+\kappa}\ud s+K(\varepsilon)(1+|x|^{q'(4+\kappa)})\big(|y_1||y_2||y_3|\big)^{4+\kappa}\label{eq9}.
\end{gather}
Plugging \eqref{eq7}-\eqref{eq9} into \eqref{eq6} yields \eqref{eta3}.
Finally, by means of an analogous proof for \eqref{eta3}, we obtain \eqref{eta4}. Thus, the proof is finished.
\end{proof}

\begin{lem}\label{varphi}
	Let Assumptions \ref{assum1}-\ref{assum3} hold and $h\in\mbf C_b^4(\mbb R^d)$. 
	Let $\varphi$ be the function defined by \eqref{eq10}.
Then, for any $x\in\mbb R^d$,
\begin{gather}	
	|\varphi(x)|\le K(1+|x|),\label{varphigrowth}\\
	\|\nabla^i \varphi(x)\|_{\otimes}\le K(1+|x|^{(i-1)q'}),\quad i=1,2,3,4.\label{varphidiff}
\end{gather}
Moreover, $\varphi$ is a solution to the Poisson equation
\begin{align}
	\mcal L \varphi=h-\pi(h). \label{poisson}
\end{align}
\end{lem}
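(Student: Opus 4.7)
The plan is to prove the three claims in turn: the linear-growth bound on $\varphi$, the polynomial bounds on $\nabla^i\varphi$ for $i=1,\dots,4$, and finally the Poisson identity $\mcal L\varphi=h-\pi(h)$. Throughout, the integrand $\mbf E h(X^x(t))-\pi(h)$ and all its $x$-derivatives decay exponentially in $t$, so the integral defining $\varphi$ converges and one may differentiate under the integral sign.

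For \eqref{varphigrowth}, I would observe that since $h\in\mbf C_b^4$, the mean value theorem gives $h\in Poly(1,\mbb R^d)$ with constant $K(h)\le\|\nabla h\|_\infty$. Applying \eqref{pif} with $l=1$ then yields
\[
|\mbf E h(X^x(t))-\pi(h)|\le K(h)(1+|x|)e^{-\lambda_1 t},
\]
and integrating over $t\in[0,\infty)$ gives $|\varphi(x)|\le K(1+|x|)$.

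For \eqref{varphidiff}, I would differentiate under the integral sign and apply the chain rule via mean-square derivatives. For $i=1$,
\[
\nabla\varphi(x)y_1=-\int_0^\infty \mbf E\LL\nabla h(X^x(t)),\eta^x_{y_1}(t)\RR\,\ud t,
\]
and \eqref{eta1} combined with boundedness of $\nabla h$ gives $|\nabla\varphi(x)y_1|\le K|y_1|$. For general $i$, the chain rule produces a sum over set-partitions of $\{y_1,\dots,y_i\}$: each term is of the form $\mbf E\bigl[\nabla^k h(X^x(t))(\eta^x_{A_1}(t),\dots,\eta^x_{A_k}(t))\bigr]$, where $\{A_1,\dots,A_k\}$ is a partition of $\{y_1,\dots,y_i\}$ and $\eta^x_{A_j}(t)$ denotes the mixed mean-square derivative indexed by $A_j$. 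Since $h\in\mbf C_b^4$, each factor $\nabla^k h$ is uniformly bounded, so it suffices to control the expected product of the various $\eta^x_{A_j}(t)$ by H\"older's inequality. The $L^p$ bounds in \eqref{eta1}--\eqref{eta4} are precisely tuned so that the standard H\"older exponents close: for $i=2$ use $(2,2)$ against \eqref{eta1} or directly \eqref{eta2}; for $i=3$ use $(4,4,2)$ combining \eqref{eta1}--\eqref{eta2} or reduce to \eqref{eta3}; for $i=4$ use $(4,4,4,4)$, $(4,4,2)$, $(2,2)$, or directly \eqref{eta4}, depending on the block sizes of the partition. In every case the time-exponential factors multiply, yielding exponential decay in $t$, while the polynomial prefactor $(1+|x|^{(i-1)q'})$ arises from the worst partition (a single block of size $i$) through \eqref{eta2}--\eqref{eta4}.

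For the Poisson identity \eqref{poisson}, I would use the semigroup identity
\[
P_s\varphi(x)=-\int_0^\infty\bigl(P_{s+t}h(x)-\pi(h)\bigr)\,\ud t=-\int_s^\infty\bigl(P_uh(x)-\pi(h)\bigr)\,\ud u,
\]
so that
\[
\frac{P_s\varphi(x)-\varphi(x)}{s}=\frac{1}{s}\int_0^s\bigl(P_uh(x)-\pi(h)\bigr)\,\ud u\xrightarrow{s\to 0^+}h(x)-\pi(h).
\]
On the other hand, \eqref{varphidiff} shows $\varphi\in\mbf C^2(\mbb R^d)$ with polynomial growth, so It\^o's formula together with \eqref{bgrowth}, boundedness of $\sigma$, and Proposition \ref{Xtesti}(1) yields $P_s\varphi(x)=\varphi(x)+\int_0^s P_u(\mcal L\varphi)(x)\,\ud u$ with continuous integrand at $u=0$, giving $s^{-1}(P_s\varphi(x)-\varphi(x))\to\mcal L\varphi(x)$. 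Equating the two limits gives $\mcal L\varphi(x)=h(x)-\pi(h)$.

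The main obstacle is the bookkeeping for $\nabla^4\varphi$: with $i=4$, the chain rule produces the largest variety of partition terms, and one must check that for every such term the integrability exponents in Lemma \ref{uestimate} line up with H\"older's inequality. The choice of $L^{16+\kappa_1}$, $L^{8+\kappa_2}$, $L^{4+\kappa_3}$, $L^2$ in \eqref{eta1}--\eqref{eta4} is exactly what makes this pairing feasible, and tracking which partition yields the polynomial exponent $3q'$ (namely the singleton partition with $k=1$) is the final accounting step.
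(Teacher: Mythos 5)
Your proposal is correct and, for the two substantive parts (the growth bound and the derivative bounds), follows essentially the same route as the paper: apply \eqref{pif} with $l=1$ and integrate for \eqref{varphigrowth}, then differentiate $u(t,x)=\mbf Eh(X^x(t))$ under the integral sign, expand via the chain rule over partitions into mean-square derivatives, and close the H\"older pairings with \eqref{eta1}--\eqref{eta4}; your accounting of which partition forces which exponent (and that the singleton block $\eta^x_{y_1,\ldots,y_i}$ produces the $(1+|x|^{(i-1)q'})$ prefactor) matches the paper's computation for $i=1,2$ and its ``similarly'' for $i=3,4$.

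The one place you genuinely diverge is the Poisson identity. The paper exchanges $\mcal L$ with the time integral (justified by the decay bound on $\mcal Lu(t,x)$), invokes the Kolmogorov relation $\mcal Lu(t,x)=\frac{\PD}{\PD t}u(t,x)$ obtained from It\^o's formula, and integrates $\partial_t u$ over $[0,\infty)$. You instead compute the generator of $\varphi$ as the limit $s^{-1}(P_s\varphi-\varphi)$ in two ways: once from the shifted-integral identity $P_s\varphi(x)=-\int_s^\infty(P_uh(x)-\pi(h))\,\ud u$, and once from It\^o applied to $\varphi$ itself. Both are standard and both require a justification you supply (Fubini for the semigroup identity, a true-martingale argument using the boundedness of $\sigma^\top\nabla\varphi$ and the polynomial growth of $\mcal L\varphi$ against the moment bounds of Proposition \ref{Xtesti}(1)); your version avoids having to commute the differential operator $\mcal L$ past the infinite time integral, at the cost of needing continuity of $u\mapsto P_u(\mcal L\varphi)(x)$ at $u=0$. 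Either way the conclusion is the same, and no step in your outline fails.
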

\begin{proof}
By \eqref{pif}, $|\varphi(x)|\le K(h)(1+|x|)\int_0^\infty e^{-\lambda_1 t}\ud t\le K(h)(1+|x|)$, which indicates that $\varphi$ is well defined and \eqref{varphigrowth} holds.

Denoting $u(t,x):=\mbf Eh(X^{x}(t))$, we have that for any $x,y_1\in\mbb R^d$, 
$\nabla_x u(t,x)y_1=\mbf E(\nabla h(X^x(t))\eta^x_{y_1}(t))$, due to the definition of $\eta^x_{y_1}$ and $h\in\mbf C^1_b(\mbb R^d)$. It follows from \eqref{eta1}, $h\in\mbf C_b^4(\mbb R^d)$ and the H\"older inequality that
$|\nabla_x u(t,x)y_1|\le K\mbf E|\eta^x_{y_1}(t)|\le K|y_1|e^{-C_2t}$. By the arbitrariness of $y_1$, $|\nabla_x u(t,x)|\le Ke^{-C_2t}$, which implies 
$|\nabla \varphi(x)|\le \int_0^\infty|\nabla_x u(t,x)|\ud t\le K.$

Further,
$\nabla ^2_x u(t,x)(y_1,y_2)=\mbf E\big(\nabla h(X^x(t))\eta^x_{y_1,y_2}(t)+\nabla^2 h(X^x(t))(\eta^x_{y_1}(t),\eta^x_{y_2}(t))\big)$  for any $x,y_1,y_2\in\mbb R^d$. Then \eqref{eta2}, $h\in\mbf C_b^4(\mbb R^d)$ and the H\"older inequality yield
$$|\nabla ^2_x u(t,x)(y_1,y_2)|\le K\mbf E|\eta^x_{y_1,y_2}(t)|+K\mbf E|\eta^x_{y_1}(t)||\eta^x_{y_2}(t)|\le K(1+|x|^{q'})|y_1||y_2|e^{-C_2t}.$$
This gives $\|\nabla ^2_x u(t,x)\|_{HS}\le K(1+|x|^{q'})e^{-C_2t}$ and thus $\|\nabla ^2 \varphi(x)\|_{HS}\le K(1+|x|^{q'})$.
Similarly, it can be verified that \eqref{varphidiff} holds for $i=3,4$.

By the It\^o formula, 
$\mbf Eh(X^x(t))=h(x)+\int_0^t\mbf E\mcal Lh(X^x(s))\ud s$, which gives $\mbf E\mcal Lh(X^x(t))=\frac{\PD }{\PD t}\mbf Eh(X^x(t))$, i.e., $$\mcal Lu(t,x)=\frac{\PD }{\PD t}u(t,x).$$ It follows from \eqref{generator}, \eqref{bgrowth} and the previous estimates for $\nabla_x u(t,x)$ and $\nabla_x^2u(t,x)$ that
\begin{align}
	|\mcal Lu(t,x)|\le K(1+|x|^{q}+|x|^{q'})e^{-C_2t}.\label{Eq8}
\end{align}
Thus, we can exchange the operator $\mcal L$ and the integration in $t$  for $\mcal L\int_0^{\infty} (u(t,x)-\pi(h))\ud t$. Accordingly, using \eqref{pif} and \eqref{Eq8} yields that for any $x\in\mbb R^d$,
\begin{align*}
	\mcal L\varphi(x)&=-\int_0^{\infty}\mcal Lu(t,x)\ud t=-\int_0^{\infty}\frac{\PD }{\PD t}u(t,x)\ud t\\
	&=u(0,x)-\lim_{t\to+\infty}u(t,x)
	=h(x)-\lim_{t\to+\infty}\mbf Eh(X^x(t))=h(x)-\pi(h).
\end{align*}
This finishes the proof.
\end{proof}

\subsection{Detailed proof}
In this part, we give the proof of Theorem \ref{CLT2}. As is mentioned previously, we will split $\frac{1}{\sqrt{\tau}}(\Pi_{\tau,2}(h)-\pi(h))$ into a martingale difference series sum and a negligible remainder, based on the Poisson equation \eqref{poisson}. 

\textbf{Proof of Theorem \ref{CLT2}.} For the convenience of notations, we denotes $m=\tau^{-2}$ with $\tau$ being sufficiently small. By \eqref{poisson},  we have
\begin{align*}
	&\phantom{=}\frac{1}{\sqrt{\tau}}(\Pi_{\tau,2}(h)-\pi(h))\\
	&=\tau^{-\frac12}\frac{1}{m}\sum_{k=0}^{m-1}(h(\bar X^x_k)-\pi(h))=\tau^{\frac32}\sum_{k=0}^{m-1}\mcal L\varphi(\bar{X}^x_k)\\
	&=\tau^{\frac12}\sum_{k=0}^{m-1}\Big(\mcal L\varphi(\bar{X}^x_k)\tau-\big(\varphi(\bar{X}^x_{k+1})-\varphi(\bar{X}^x_k)\big)\Big)+\tau^{\frac12}(\varphi(\bar{X}^x_m)-\varphi(x)).
\end{align*}
Lemma \ref{varphi} enables us to apply the Taylor expansion for $\varphi$:
\begin{align*}
\varphi(\bar{X}^x_{k+1})-\varphi(\bar{X}^x_k)
	=&\;\LL\nabla\varphi(\bar{X}^x_k),\Delta \bar{X}^x_k\RR+\frac{1}{2}\LL\nabla^2\varphi(\bar {X}^x_k),\Delta \bar{X}^x_k(\Delta\bar{X}^x_k)^\top\RR_{HS}\\
	&\;+\frac{1}{2}\int_0^1(1-\theta)^2\nabla^3\varphi(\bar X_k^x+\theta \Delta \bar X^x_k)(\Delta \bar{X}^x_k,\Delta \bar{X}^x_k,\Delta \bar{X}^x_k)\ud\theta,
\end{align*} 
where $\Delta \bar{X}^x_k:=b(\bar X^x_{k+1})\tau+\sigma(\bar{X}^x_k)\Delta W_k$, $k=0,1,\ldots,m$.
It follows from \eqref{generator} and the above formulas that
\begin{align*}
	\frac{1}{\sqrt{\tau}}(\Pi_{\tau,2}(h)-\pi(h))=\mcal H_\tau+\mcal R_{\tau},
\end{align*}
where $\mcal H_{\tau}$ and $\mcal R_\tau$ are given by
\begin{gather}
	\mcal H_\tau:=-\tau^{\frac{1}{2}}\sum_{k=0}^{m-1}\LL\nabla\varphi(\bar{X}^x_k),\sigma(\bar{X}^x_k)\Delta W_k\RR,\quad
	\mcal R_\tau=\sum_{i=1}^6R_{\tau,i},\label{HRtau}
\end{gather}
with
\begin{align}
R_{\tau,1}:=&\;\tau^{\frac{1}{2}}\big(\varphi(\bar{X}^x_m)-\varphi(x)\big),\label{Rtau1}\\
R_{\tau,2}:=&\;-\tau^{\frac32}\sum_{k=0}^{m-1}\LL\nabla\varphi(\bar{X}^x_k),b(\bar{X}^x_{k+1})-b(\bar{X}^x_k)\RR,\label{Rtau2}\\
R_{\tau,3}:=&\;\frac{1}{2}	\tau^{\frac12}\sum_{k=0}^{m-1}\LL\nabla^2\varphi(\bar{X}^x_k),\sigma(\bar{X}^x_k)(\tau I_D-\Delta W_k\Delta W_k^\top)\sigma(\bar{X}^x_k)^\top\RR_{HS},\label{Rtau3}\\
R_{\tau,4}:=&\;-\frac{1}{2}	\tau^{\frac52}\sum_{k=0}^{m-1}\LL\nabla^2\varphi(\bar{X}^x_k),b(\bar{X}^x_{k+1})b(\bar{X}^x_{k+1})^\top\RR_{HS},\label{Rtau4}\\
R_{\tau,5}:=&\;-	\tau^{\frac32}\sum_{k=0}^{m-1}\LL\nabla^2\varphi(\bar{X}^x_k),b(\bar{X}^x_{k+1})(\sigma(\bar{X}^x_k)\Delta W_k)^\top\RR_{HS}\label{Rtau5},\\
R_{\tau,6}:=&\;-\frac{1}{2}	\tau^{\frac12}\sum_{k=0}^{m-1}\int_0^1(1-\theta)^2\nabla^3\varphi(\bar X^x_k+\theta \Delta \bar X^x_k)(\Delta \bar{X}^x_k,\Delta \bar{X}^x_k,\Delta \bar{X}^x_k)\ud\theta.\label{Rtau6}
\end{align}
By Lemmas \ref{Htaulimit}-\ref{Rtaulimit} below and the Slutsky theorem,  $\frac{1}{\sqrt{\tau}}(\Pi_{\tau,2}(h)-\pi(h))\overset{d}{\longrightarrow}\mcal N(0,\pi(|\sigma^\top\nabla\varphi|^2))$ as $\tau\to 0$ and the proof is complete. \hfill$\square$.

\begin{lem}\label{Htaulimit}
Suppose that Assumptions \ref{assum1}-\ref{assum3} hold. Then for any $x\in\mbb R^d$,
\begin{align*}
	\mcal H_\tau\overset{d}{\longrightarrow}\mcal N(0,\pi(|\sigma^\top\nabla\varphi|^2))\quad\text{as}~\tau\to 0.
\end{align*}
\end{lem}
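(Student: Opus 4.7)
The plan is to recognize $\mcal H_\tau$ as a sum of martingale differences and invoke a martingale central limit theorem (e.g.\ the Hall--Heyde triangular-array version). Set $\xi_{k,\tau}:=-\tau^{1/2}\LL\nabla\varphi(\bar{X}^x_k),\sigma(\bar{X}^x_k)\Delta W_k\RR$. Since $\bar{X}^x_k$ is $\mcal F_{t_k}$-measurable and $\Delta W_k$ is independent of $\mcal F_{t_k}$ with covariance $\tau I_D$, one has $\mbf E[\xi_{k,\tau}\,|\,\mcal F_{t_k}]=0$, so $\{\xi_{k,\tau}\}_{k=0}^{m-1}$ is a martingale difference array with respect to $\{\mcal F_{t_{k+1}}\}$, and $\mcal H_\tau=\sum_{k=0}^{m-1}\xi_{k,\tau}$.

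The martingale CLT then demands two ingredients as $\tau\to 0$: (i) convergence in probability of the conditional variance $V_\tau:=\sum_{k=0}^{m-1}\mbf E[\xi_{k,\tau}^2\,|\,\mcal F_{t_k}]$ to $\pi(|\sigma^\top\nabla\varphi|^2)$, and (ii) a Lyapunov (or conditional Lindeberg) condition, for which I plan to verify $\sum_{k=0}^{m-1}\mbf E\xi_{k,\tau}^4\to 0$. Using $\Delta W_k\,|\,\mcal F_{t_k}\sim\mcal N(0,\tau I_D)$ gives $\mbf E[\xi_{k,\tau}^2\,|\,\mcal F_{t_k}]=\tau^2|\sigma^\top\nabla\varphi|^2(\bar{X}^x_k)$, so (recalling $m=\tau^{-2}$)
\[
V_\tau=\frac{1}{m}\sum_{k=0}^{m-1}g(\bar{X}^x_k),\qquad g:=|\sigma^\top\nabla\varphi|^2.
\]
By Assumption \ref{assum1} and the $i=1$ case of \eqref{varphidiff}, both $\sigma$ and $\nabla\varphi$ are bounded, so $g$ is bounded; combined with the Lipschitz estimate $|\nabla\varphi(x)-\nabla\varphi(y)|\le K(1+|x|^{q'}+|y|^{q'})|x-y|$ (coming from \eqref{varphidiff} with $i=2$), this yields $g\in Poly(q'+1,\mbb R^d)$.

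To prove (i) I would split $V_\tau-\pi(g)=(V_\tau-\pi_\tau(g))+(\pi_\tau(g)-\pi(g))$. The deterministic second piece is $O(\tau^{1/2})$ by \eqref{pitaupi}. For the random piece, a second-moment computation gives
\[
\mbf E|V_\tau-\pi_\tau(g)|^2=\frac{1}{m^2}\sum_{k,j=0}^{m-1}\mbf E\big[(g(\bar{X}^x_k)-\pi_\tau(g))(g(\bar{X}^x_j)-\pi_\tau(g))\big],
\]
and for $j\ge k$ the Markov property lets me rewrite the covariance as $\mbf E[(g(\bar{X}^x_k)-\pi_\tau(g))\,(P_\tau^{j-k}g(\bar{X}^x_k)-\pi_\tau(g))]$, where $P_\tau$ is the one-step BEM transition operator. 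Boundedness of $g$ together with the geometric ergodicity \eqref{pitau} and the uniform moment bound of Theorem \ref{pth-moment} gives an exponential covariance estimate $\le K(1+|x|^{q'+1})e^{-\xi_1(j-k)\tau}$. Summing, the double series is dominated by $\tfrac{K}{m}\sum_{\ell\in\mbb Z}e^{-\xi_1|\ell|\tau}=O(\tfrac{1}{m\tau})=O(\tau)$, so $\mbf E|V_\tau-\pi_\tau(g)|^2\to 0$, establishing (i).

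Condition (ii) is the easy part: by the Gaussian tail bound $\mbf E[|\LL v,\Delta W_k\RR|^4\,|\,\mcal F_{t_k}]\le 3\tau^2|v|^4$ and the boundedness of $\sigma$ and $\nabla\varphi$,
\[
\sum_{k=0}^{m-1}\mbf E\xi_{k,\tau}^4\le K\,\tau^2\sum_{k=0}^{m-1}\tau^2=K\,m\tau^4=K\tau^2\longrightarrow 0.
\]
Then the martingale CLT yields $\mcal H_\tau\overset{d}{\longrightarrow}\mcal N(0,\pi(|\sigma^\top\nabla\varphi|^2))$. I expect the main obstacle to be step (i): the array is not stationary, and because $m\tau=\tau^{-1}$ agrees with the mixing time scale $1/\xi_1$ of $\bar X^x$, the bias $\pi_\tau(g)-\pi(g)$ must be separated from the variance and controlled by \eqref{pitaupi}, while the variance itself needs the decoupling via $P_\tau^{j-k}$ and the moment bound of Theorem \ref{pth-moment} to handle the initial state $x$.
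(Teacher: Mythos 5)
Your proof is correct and follows essentially the same route as the paper: both invoke a martingale CLT for the triangular array $\xi_{k,\tau}$ and reduce the variance condition to an ergodic-average statement handled via the numerical invariant measure $\pi_\tau$ (splitting off the bias $\pi_\tau(g)-\pi(g)$ via \eqref{pitaupi}, then controlling $\mbf E|V_\tau-\pi_\tau(g)|^2$ by the exponential mixing \eqref{pitau} and the moment bound of Theorem \ref{pth-moment}). The only differences are cosmetic choices within the same framework: you verify convergence of the conditional variance $\tau^2\sum_k g(\bar X^x_k)$ plus a fourth-moment Lyapunov condition, whereas the paper verifies convergence of the raw sum of squares $\tau\sum_k|Z_k|^2$ plus the maximal condition $\tau\,\mbf E\max_k|Z_k|^2\to 0$; these are equivalent formulations of the hypotheses of the martingale CLT.
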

\begin{proof}
Recall that 	$\mcal H_\tau:=-\tau^{\frac{1}{2}}\sum_{k=0}^{m-1}\LL\nabla\varphi(\bar{X}^x_k),\sigma(\bar{X}^x_k)\Delta W_k\RR$ with $m=\tau^{-2}$.
	According to \cite[Theorem 2.3]{CLT74}, it suffices to show that
	\begin{gather}
		\lim_{\tau\to0}\tau\mbf E\max_{0\le k\le m-1}|Z_k|^2=0,\label{eq11}\\
	\tau\sum_{k=0}^{m-1}|Z_k|^2\overset{\mbf P}{\longrightarrow}\pi(|\sigma^\top\nabla\varphi|^2)\quad{as}~\tau\to 0,\label{eq12}
	\end{gather}
where $Z_k:=\LL\nabla\varphi(\bar{X}^x_k),\sigma(\bar{X}^x_k)\Delta W_k\RR$, $k=0,1,\ldots,m$.
It follows from the boundedness of $\sigma$ and \eqref{varphidiff} that 
\begin{align*}
	&\;\tau\mbf E\max_{0\le k\le m-1}|Z_k|^2\\
	\le&\;\tau \mbf E\max_{0\le k\le m-1}\big(|Z_k|^2\mbf 1_{\{|Z_k|^2\le 1\}}\big)+\tau \mbf E\max_{0\le k\le m-1}\big(|Z_k|^2\mbf 1_{\{|Z_k|^2>1\}}\big)\\
	\le &\;\tau+\tau\sum_{k=0}^{m-1}\mbf E\big(|Z_k|^2\mbf 1_{\{|Z_k|^2>1\}}\big)
	\le \tau +\tau\sum_{k=0}^{m-1}\mbf E|Z_k|^4\\
	\le &\;\tau+K\tau\sum_{k=0}^{m-1}\mbf E|\Delta W_k|^4\le \tau +K\tau^3m\le K\tau,
\end{align*}
which implies \eqref{eq11}.

By \eqref{varphidiff}, for any $x,y\in\mbb R^d$,
\begin{align*}
|\nabla\varphi(x)-\nabla\varphi(y)|= \Big|\int_0^1\nabla^2\varphi(x+\theta(y-x))(y-x)\ud \theta\Big|\le K(1+|x|^{q'}+|y|^{q'})|x-y|,
\end{align*}
which together with the assumptions on $\sigma$ gives 
\begin{gather}
	|\sigma^\top\nabla\varphi|^2\in Poly(q'+1,\mbb R^d). \label{eq14}
\end{gather} As a result of \eqref{pitaupi}, $\big|\pi_\tau(|\sigma^\top\varphi|^2)-\pi(|\sigma^\top\varphi|^2)\big|\le K\tau^{\frac12}$.
Thus, once we show that
\begin{align}\label{eq13}
	\tau\sum_{k=0}^{m-1}|Z_k|^2-\pi_\tau(|\sigma^\top\nabla\varphi|^2)\overset{\mbf P}{\longrightarrow}0 \quad\text{as}~\tau\to 0,
\end{align}
we obtain \eqref{eq12} and complete the proof.

According to \eqref{pitau} and \eqref{eq14},
\begin{align*}
	\Big|\mbf E\big(|\sigma(\bar{X}_k^x)^\top\nabla\varphi(\bar{X}_k^x)|^2\big)-\pi_\tau(|\sigma^\top\nabla\varphi|^2)\Big|\le K(1+|x|^{q'+1})e^{-\xi_1k\tau}, ~k\ge 0.
\end{align*} 
By the above formula and the property of conditional expectations, for any $j\ge i$,
\begin{align}
	&\Big|\mbf E_i\big(|\sigma(\bar{X}^x_j)^\top\nabla\varphi(\bar{X}^x_j)|^2\big)-\pi_\tau(|\sigma^\top\nabla\varphi|^2)\Big|=\Big|\mbf E_i\big(|\sigma(\bar{X}_j^{i,\bar{X}^x_i})^\top\nabla\varphi(\bar{X}_j^{i,\bar{X}^x_i})|^2\big)-\pi_\tau(|\sigma^\top\nabla\varphi|^2)\Big|\nonumber\\
	=&\Big|\Big(\mbf E\big(|\sigma(\bar{X}_j^{i,y})^\top\nabla\varphi(\bar{X}_j^{i,y})|^2\big)-\pi_\tau(|\sigma^\top\varphi|^2)\Big)\big|_{y=\bar{X}^x_i}\Big|\nonumber\\
	\le &K(1+|\bar{X}^x_i|^{q'+1})e^{-\xi_1(j-i)\tau}.\label{eq16}
\end{align}
Hereafter, we denote by $\mbf E_i(\cdot)$ the conditional expectation $\mbf E(\cdot|\mcal F_{t_i})$, $i\ge 0$.
Further, 
\begin{align}
	&\;\mbf E\Big(\tau\sum_{k=0}^{m-1}|Z_k|^2-\pi_\tau(|\sigma^\top\nabla\varphi|^2\Big)^2=\mbf E\Big(\tau^2\sum_{k=0}^{m-1}\big(\tau^{-1}|Z_k|^2-\pi_\tau(|\sigma^\top\nabla\varphi|^2\big)\Big)^2\nonumber\\
	=&\;\tau^4\sum_{i=0}^{m-1}\mbf E\big(\tau^{-1}|Z_i|^2-\pi_\tau(|\sigma^\top\nabla\varphi|^2)\big)^2\nonumber\\
	&\;+2\tau^4\sum_{0\le i<j\le m-1}\mbf E\Big[\big(\tau^{-1}|Z_i|^2-\pi_\tau(|\sigma^\top\nabla\varphi|^2)\big)\big(\tau^{-1}|Z_j|^2-\pi_\tau(|\sigma^\top\nabla\varphi|^2)\big)\Big].\label{Eq1}
\end{align}
It follows from the boundedness of $\sigma$, \eqref{varphidiff}, Proposition \ref{Xtergodic}(2), \eqref{pitaupi}  and \eqref{eq14} that for $\tau\in(0,1)$ and $i\ge 0$,
\begin{align}\label{eq15}
	&\phantom{\le}\mbf E\big(\tau^{-1}|Z_i|^2-\pi_\tau(|\sigma^\top\nabla\varphi|^2)\big)^2\le 2\tau^{-2}\mbf E|Z_i|^4+2(\pi_\tau(|\sigma^\top\nabla\varphi|^2))^2\nonumber\\
	&\le K+4\big(\pi_\tau(|\sigma^\top\nabla\varphi|^2)-\pi(|\sigma^\top\nabla\varphi|^2)\big)^2+4\big(\pi(|\sigma^\top\nabla\varphi|^2)\big)^2\nonumber\\
	&\le K+K\tau+K(\pi(|\cdot|^{q'+1}))^2\le K.
\end{align}

By the property of conditional expectations,
$$\mbf E_j|Z_j|^2=\big(\mbf E\LL x,\Delta W_j\RR^2\big)\big|_{x=\sigma(\bar{X}^x_j)^\top\nabla\varphi(\bar{X}^x_j)}=\tau |\sigma(\bar{X}^x_j)^\top\nabla\varphi(\bar{X}^x_j)|^2.$$
Thus, $\mbf E_{i+1}|Z_j|^2=\mbf E_{i+1}(\mbf E_j|Z_j|^2)=\tau \mbf E_{i+1}|\sigma(\bar{X}^x_j)^\top\nabla\varphi(\bar{X}^x_j)|^2$ for any $j>i$.
Combining the above relation, \eqref{eq16}, \eqref{eq15} and Theorem \ref{pth-moment}, we have that for  $j>i$ and $\tau<1$,
\begin{align}
	&\;\Big|\mbf E\Big[\big(\tau^{-1}|Z_i|^2-\pi_\tau(|\sigma^\top\nabla\varphi|^2)\big)\big(\tau^{-1}|Z_j|^2-\pi_\tau(|\sigma^\top\nabla\varphi|^2)\big)\Big]\Big|\nonumber\\
	=&\;\Big|\mbf E\Big[\big(\tau^{-1}|Z_i|^2-\pi_\tau(|\sigma^\top\nabla\varphi|^2)\big)\big(\tau^{-1}\mbf E_{i+1}|Z_j|^2-\pi_\tau(|\sigma^\top\nabla\varphi|^2)\big)\Big]\Big|\nonumber\\
	\le&\; \mbf E\Big[\big|\tau^{-1}|Z_i|^2-\pi_\tau(|\sigma^\top\nabla\varphi|^2)\big|\big|\mbf E_{i+1}\big(|\sigma(\bar{X}^x_j)^\top\nabla\varphi(\bar{X}^x_j)|^2\big)-\pi_\tau(|\sigma^\top\nabla\varphi|^2)\big|\Big]\nonumber\\
	\le &\;Ke^{-\xi_1(j-i-1)\tau}\mbf E\Big[\big|\tau^{-1}|Z_i|^2-\pi_\tau(|\sigma^\top\nabla\varphi|^2)\big|(1+|\bar{X}^x_{i+1}|^{q'+1})\Big]\nonumber\\
	\le &\; Ke^{-\xi_1(j-i)\tau}\big(\mbf E\big|\tau^{-1}|Z_i|^2-\pi_\tau(|\sigma^\top\nabla\varphi|^2)\big|^2\big)^{\frac12}(1+ \big(\mbf E|\bar{X}^x_{i+1}|^{2q'+2}\big)^{\frac12})\nonumber\\
	\le &\;K(x)e^{-\xi_1(j-i)\tau}.\label{Eq2}
\end{align}
Plugging \eqref{eq15}-\eqref{Eq2} into \eqref{Eq1} yields
\begin{align*}
&\;\mbf E\Big(\tau\sum_{k=0}^{m-1}|Z_k|^2-\pi_\tau(|\sigma^\top\nabla\varphi|^2\Big)^2\\
\le&\; K\tau^4m+K(x)\tau^4\sum_{0\le i<j\le m-1}	e^{-\xi_1(j-i)\tau}\\
=&\; K\tau^2+K(x)\tau^4\sum_{i=0}^{m-1}\sum_{j=i+1}^{m-1}e^{-\xi_1(j-i)\tau}\\
\le &\;K\tau^2+K(x)\tau^4m\sum_{j=1}^{\infty}e^{-\xi_1j\tau}\le K(x)\tau\to0\quad\text{as}~\tau\to 0,
\end{align*}
which leads to \eqref{eq13} and finishes the proof.
\end{proof}

\begin{lem}\label{Rtaulimit}
Suppose that Assumptions \ref{assum1}-\ref{assum3} hold. Then for any $x\in\mbb R^d$, $\mcal R_\tau\overset{\mbf P}{\longrightarrow}0$  as $\tau$ tends to $0$.
\end{lem}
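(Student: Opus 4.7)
The strategy is to prove that each $R_{\tau,i}$, $i=1,\ldots,6$, tends to $0$ in $L^1(\Omega)$ or $L^2(\Omega)$; since convergence in either norm implies convergence in probability, this will give $\mcal R_\tau\overset{\mbf P}{\longrightarrow}0$. The recurring inputs are the polynomial bounds \eqref{varphidiff} on $\nabla^i\varphi$, the polynomial growth \eqref{bgrowth} of $b$, the uniform $p$th-moment estimate of Theorem \ref{pth-moment}, and—repeatedly—martingale-difference arguments exploiting the conditional mean-zero structure of expressions built from $\Delta W_k$. As a preliminary I would record the one-step moment bound $\mbf E|\bar X^x_{k+1}-\bar X^x_k|^{2p}\le K(x,p)\tau^p$ for every $p\ge 1$, which follows from \eqref{BEM}, \eqref{bgrowth}, Assumption \ref{assum1}, Theorem \ref{pth-moment}, and the Gaussian moments of $\Delta W_k$.

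Three terms admit direct estimates. For $R_{\tau,1}$ the growth $|\varphi(x)|\le K(1+|x|)$ together with Theorem \ref{pth-moment} yields $\mbf E|R_{\tau,1}|\le K(x)\tau^{1/2}$. For $R_{\tau,4}$ the growth bounds on $\nabla^2\varphi$ and $b$ combined with $m=\tau^{-2}$ give $\mbf E|R_{\tau,4}|\le K(x)\tau^{5/2}m=K(x)\tau^{1/2}$. For $R_{\tau,3}$, the matrix $\tau I_D-\Delta W_k\Delta W_k^\top$ has $\mcal F_{t_k}$-conditional mean zero and $L^2$-norm $O(\tau)$, so the summands are orthogonal martingale differences in $L^2(\Omega)$, yielding $\mbf E R_{\tau,3}^2\le K(x)\tau\cdot m\cdot\tau^2=K(x)\tau$.

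The terms $R_{\tau,2}$ and $R_{\tau,5}$ both require an It\^o-type decomposition. For $R_{\tau,2}$, a first-order Taylor expansion $b(\bar X^x_{k+1})-b(\bar X^x_k)=\nabla b(\bar X^x_k)\sigma(\bar X^x_k)\Delta W_k+E_k$ with $\mbf E|E_k|\le K(x)\tau$ (obtained from Assumption \ref{assum3}, \eqref{bsuper}, and the one-step bound) splits $R_{\tau,2}$ into a martingale-difference sum of $L^2$-size $K(x)\tau$ and an $L^1$-remainder of size $\tau^{3/2}\cdot m\cdot\tau=K(x)\tau^{1/2}$. For $R_{\tau,5}$, the analogous splitting $b(\bar X^x_{k+1})=b(\bar X^x_k)+\Delta b_k$ paired with $\sigma(\bar X^x_k)\Delta W_k$ gives a martingale sum (the leading part) of $L^2$-size $K(x)\tau$ and a Cauchy--Schwarz remainder of $L^1$-size $K(x)\tau^{1/2}$, using $\mbf E|\Delta b_k|^2\le K(x)\tau$ and $\mbf E|\sigma(\bar X^x_k)\Delta W_k|^2\le K\tau$.

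The main obstacle is $R_{\tau,6}$: a naive triangle-inequality bound gives only $\tau^{1/2}\cdot m\cdot\mbf E|\Delta\bar X^x_k|^3=O(1)$. I plan a two-step expansion. First, expand $\nabla^3\varphi(\bar X^x_k+\theta\Delta\bar X^x_k)=\nabla^3\varphi(\bar X^x_k)+\theta\int_0^1\nabla^4\varphi(\bar X^x_k+s\theta\Delta\bar X^x_k)(\Delta\bar X^x_k)\,ds$; the quartic remainder contributes an $L^1$-total of $\tau^{1/2}\cdot m\cdot K(x)\tau^2=K(x)\tau^{1/2}$. Second, in the leading piece $\frac{1}{6}\tau^{1/2}\sum_k\nabla^3\varphi(\bar X^x_k)(\Delta\bar X^x_k,\Delta\bar X^x_k,\Delta\bar X^x_k)$, decompose $\Delta\bar X^x_k=\sigma(\bar X^x_k)\Delta W_k+\tau b(\bar X^x_{k+1})$ and multilinearly expand. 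Every mixed term carries at least one factor of $\tau b(\bar X^x_{k+1})$ and is of $L^1$-size $O(\tau^2)$ per step, yielding a total of $O(\tau^{1/2})$. The purely stochastic piece $\nabla^3\varphi(\bar X^x_k)(\sigma(\bar X^x_k)\Delta W_k)^{\otimes 3}$ is an $\mcal F_{t_k}$-conditional martingale difference, because each of its scalar entries is an odd product of centered Gaussians and hence has vanishing conditional expectation; orthogonality together with $\mbf E|\sigma(\bar X^x_k)\Delta W_k|^6=O(\tau^3)$ then yields an $L^2$-contribution of $K(x)\tau$. Isolating this hidden martingale structure via vanishing odd Gaussian moments is, to my mind, the crux of the argument.
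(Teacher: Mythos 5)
Your proposal is correct and follows essentially the same route as the paper: termwise $L^1$/$L^2$ bounds, martingale-difference orthogonality for $R_{\tau,3}$, $R_{\tau,5}$ and the leading parts of $R_{\tau,2}$, Taylor expansion of $b$ for $R_{\tau,2}$ and $R_{\tau,5}$, and for $R_{\tau,6}$ the same combination of a $\nabla^4\varphi$ Taylor remainder, absorption of mixed terms carrying a factor $\tau b(\bar X^x_{k+1})$, and the vanishing of odd Gaussian moments to turn the purely stochastic cubic term into a martingale-difference sum. The only differences are cosmetic (the order in which you expand $\nabla^3\varphi$ versus $(\Delta\bar X^x_k)^{\otimes 3}$, and lumping the drift and second-order pieces of $R_{\tau,2}$ into a single remainder $E_k$).
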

\begin{proof}
We will  prove $\lim\limits_{\tau\to0}\mbf E|\mcal R_\tau|=0$ to obtain the conclusion.

\emph{Estimate of $\mcal R_{\tau,1}$.} By Theorem \ref{pth-moment}, \eqref{varphigrowth} and \eqref{Rtau1},
$\mbf E|\mcal R_{\tau,1}|\le K\tau^{\frac12}(1+\sup\limits_{n\ge 0}\mbf E|\bar{X}^x_n|)\le K(x)\tau^{\frac12}.$

\emph{Estimate of $\mcal R_{\tau,2}$.} By means of \eqref{bgrowth}, Assumption \ref{assum3}, Theorem \ref{pth-moment}, \eqref{varphidiff}  and the H\"older inequality, we have that for any $p\ge 1$, $i=2,3,4$ and $j=1,2$,
\begin{align}
\sup_{k\ge0}\mbf E|b(\bar X^x_k)|^p&\le K(1+\sup_{k\ge0}\mbf E|\bar{X}^x_k|^{pq})\le K(1+|x|^{pq}), \label{Eq5}\\
\sup_{k\ge0}\mbf E\|\nabla^j b(\bar X^x_k)\|_{\otimes}^p&\le K(1+\sup_{k\ge0}\mbf E|\bar{X}^x_k|^{pq'})\le K(1+|x|^{pq'}), \label{Eq5'}\\
 \sup_{k\ge0}\mbf E\|\nabla^i\varphi(\bar X^x_k)\|_{\otimes}^p&\le K(1+\sup_{k\ge0}\mbf E|\bar{X}^x_k|^{(i-1)pq'})\le K(1+|x|^{(i-1)pq'}). \label{Eq4}
\end{align} 
Noting that 
\begin{align*}
	b(\bar{X}^x_{k+1})-b(\bar{X}^x_k)=\nabla b(\bar{X}^x_k)\Delta \bar X^x_k+\int_0^1(1-\theta)\nabla^2 b(\bar{X}^x_k+\theta\Delta \bar X^x_k)(\Delta \bar X^x_k,\Delta \bar X^x_k)\ud \theta,
\end{align*}
one obtains from \eqref{Rtau2} that
\begin{align*}
	\mcal R_{\tau,2}=&\;-\tau^{\frac32}\sum_{k=0}^{m-1}\LL\nabla\varphi(\bar X^x_k),\nabla b(\bar X_{k})\sigma(\bar X^x_k)\Delta W_k\RR\\
	&\;-\tau^{\frac52}\sum_{k=0}^{m-1}\LL\nabla\varphi(\bar X^x_k),\nabla b(\bar X^x_{k})b(\bar X^x_{k+1})\RR\\
	&\;-\tau^{\frac32}\sum_{k=0}^{m-1}\int_0^1(1-\theta)\LL\nabla\varphi(\bar X^x_k),\nabla^2b(\bar{X}^x_k+\theta\Delta \bar X^x_k)(\Delta \bar X^x_k,\Delta \bar X^x_k)\RR\ud \theta\\
	=:&\mcal R_{\tau,2}^1+\mcal R_{\tau,2}^2+\mcal R_{\tau,2}^3.
\end{align*}
By the property of conditional expectations, for $i<j$,
\begin{align*}
	&\;\mbf E\big[\LL\nabla\varphi(\bar X^x_i),\nabla b(\bar X^x_{i})\sigma(\bar X^x_i)\Delta W_i\RR\LL\nabla\varphi(\bar X^x_j),\nabla b(\bar X^x_{j})\sigma(\bar X^x_j)\Delta W_j\RR\big]\\
	=&\;\mbf E\big[\LL\nabla\varphi(\bar X^x_i),\nabla b(\bar X^x_{i})\sigma(\bar X^x_i)\Delta W_i\RR\LL\nabla\varphi(\bar X^x_j),\nabla b(\bar X^x_{j})\sigma(\bar X^x_j)\mbf E_j(\Delta W_j)\RR\big]=0.
\end{align*}
The above relation, combined with the boundedness of $\sigma$, \eqref{varphidiff} and \eqref{Eq5'}, gives
\begin{align*}
	\mbf E|\mcal R_{\tau,2}^1|^2&=\tau^3\sum_{k=0}^{m-1}\mbf E\LL\nabla\varphi(\bar X^x_k),\nabla b(\bar X^x_{k})\sigma(\bar X^x_k)\Delta W_k\RR^2\\
	&\le K\tau^4\sum_{k=0}^{m-1}\mbf E|\nabla b(\bar X^x_{k})|^2\le K(x)\tau ^2.
\end{align*}
Applying the H\"older inequality, \eqref{varphidiff} and \eqref{Eq5}-\eqref{Eq5'}, we have
\begin{align*}
	\mbf E|\mcal R_{\tau,2}^2|\le K\tau^{\frac52}\sum_{k=0}^{m-1}(\mbf E|\nabla b(\bar{X}^x_k)|^2)^{\frac12}(\mbf E| b(\bar{X}^x_{k+1})|^2)^{\frac12}\le K(x)\tau^{\frac12}.
\end{align*}
Further, for any $p\ge 1$ and $k\ge 0$, it follows from the Minkowski inequality, \eqref{Eq5} and the boundedness of $\sigma$  that 
\begin{align}
	(\mbf E|\Delta \bar{X}^x_{k}|^p)^{\frac1p}\le \tau(\mbf E|b(\bar{X}^x_{k+1})|^{p})^{\frac 1p}+K(\mbf E|\Delta W_k|^p)^{\frac{1}{p}}\le K(1+|x|^q)\tau^{\frac12}.\label{eq17}
\end{align}
This together with the H\"older inequality, Assumption \ref{assum3} and Theorem \ref{pth-moment} yields
\begin{align*}
		\mbf E|\mcal R_{\tau,2}^3|\le K\tau^{\frac32}\sum_{k=0}^{m-1}(\mbf E|\Delta \bar X^x_k|^4)^{\frac{1}{2}}\big(1+(\mbf E|\Delta \bar X^x_k|^{2q'})^{\frac{1}{2}}+(\mbf E|\bar X^x_k|^{2q'})^{\frac{1}{2}}\big)\le  K(x)\tau^{\frac12}.
\end{align*}
In this way, we get $
	\mbf E|\mcal R_{\tau,2}|\le 	(\mbf E|\mcal R_{\tau,2}^1|^2)^{\frac12}+	\mbf E|\mcal R_{\tau,2}^2|+	\mbf E|\mcal R_{\tau,2}^3|\le K(x)\tau^{\frac12}.$

\emph{Estimate of $\mcal R_{\tau,3}$.} Notice that that for $i<j$,
\begin{align}
	&\;\mbf E\Big[\LL\nabla^2\varphi(\bar{X}^x_i),\sigma(\bar{X}^x_i)(\tau I_D-\Delta W_i\Delta W_i^\top)\sigma(\bar{X}^x_i)^\top\RR_{HS}\nonumber\\
	&\quad\cdot\LL\nabla^2\varphi(\bar{X}^x_j),\sigma(\bar{X}^x_j)(\tau I_D-\Delta W_j\Delta W_j^\top)\sigma(\bar{X}^x_j)^\top\RR_{HS}\Big]\nonumber\\
	=&\;\mbf E\Big[\LL\nabla^2\varphi(\bar{X}^x_i),\sigma(\bar{X}^x_i)(\tau I_D-\Delta W_i\Delta W_i^\top)\sigma(\bar{X}^x_i)^\top\RR_{HS}\nonumber\\
	&\quad\cdot\LL\nabla^2\varphi(\bar{X}^x_j),\sigma(\bar{X}^x_j)\mbf E_j(\tau I_D-\Delta W_j\Delta W_j^\top)\sigma(\bar{X}^x_j)^\top\RR_{HS}\Big]=0.\label{Eq3}
\end{align}
Combining \eqref{Rtau3}, \eqref{Eq3},    the boundedness of $\sigma$ and \eqref{Eq4}, we arrive at
\begin{align*}
	\mbf E|\mcal R_{\tau,3}|^2&=\frac{\tau}{4}\sum_{k=0}^{m-1}\mbf E\LL\nabla^2\varphi(\bar{X}^x_k),\sigma(\bar{X}^x_k)(\tau I_D-\Delta W_k\Delta W_k^\top)\sigma(\bar{X}^x_k)^\top\RR_{HS}^2\\
&\le K\tau \sum_{k=0}^{m-1}\mbf E\big(\|\nabla^2\varphi(\bar{X}^x_k)\|^2_{HS}(\tau^2+|\Delta W_k|^4)\big) \\
&\le K\tau\sum_{k=0}^{m-1}\big(\mbf E\|\nabla^2\varphi(\bar{X}^x_k)\|^4_{HS}\big)^{\frac12}(\tau^2+(\mbf E|\Delta W_k|^8)^{\frac12})\le K(x)\tau.
\end{align*}

\emph{Estimate of $\mcal R_{\tau,4}$.} 
By  \eqref{Rtau4}, \eqref{Eq5}, \eqref{Eq4} and the H\"older inequality,
\begin{align*}
	\mbf E|R_{\tau,4}|&\le  K\tau^{\frac52}\sum_{k=0}^{m-1}(\mbf E|\nabla^2\varphi(\bar{X}^x_k)|^2)^{\frac12}(\mbf E|b(\bar{X}^x_{k+1})|^4)^{\frac12}\le K(x)\tau^{\frac52}m\le K(x)\tau.
\end{align*}

\emph{Estimate of $\mcal R_{\tau,5}$.} We decompose $\mcal R_{\tau,5}$ (see \eqref{Rtau5}) into $\mcal R_{\tau,5}=\mcal R_{\tau,5}^1+\mcal R_{\tau,5}^2$ with
\begin{align*}
	\mcal R_{\tau,5}^1&:=-	\tau^{\frac32}\sum_{k=0}^{m-1}\LL\nabla^2\varphi(\bar{X}^x_k),(b(\bar{X}^x_{k+1})-b(\bar X^x_k))(\sigma(\bar{X}^x_k)\Delta W_k)^\top\RR_{HS},\\
	\mcal R_{\tau,5}^2&:=-	\tau^{\frac32}\sum_{k=0}^{m-1}\LL\nabla^2\varphi(\bar{X}^x_k),b(\bar{X}^x_{k})(\sigma(\bar{X}^x_k)\Delta W_k)^\top\RR_{HS}.
\end{align*}
By   the H\"older inequality, \eqref{Eq4},  \eqref{bsuper}, Theorem \ref{pth-moment} and \eqref{eq17},
\begin{align*}
	\mbf E|\mcal R_{\tau,5}^1|&\le K\tau^{\frac32}m\sup_{k\ge0}\big(\mbf E|\nabla^2\varphi(\bar{X}^x_k)|^3\big)^{\frac{1}{3}}\big(\mbf E|\Delta W_k|^3\big)^{\frac{1}{3}}\big(\mbf E|b(\bar{X}^x_{k+1})-b(\bar X^x_k)|^3\big)^{\frac{1}{3}}\\
&\le K(x)(1+\sup_{k\ge0}(\mbf E|\bar{X}^x_k|^{6q-6})^\frac{1}{6})\sup_{k\ge0}\big(\mbf E|\Delta \bar{X}^x_{k}|^6\big)^{\frac16}	\\
&\le K(x)\tau^{\frac12}.
\end{align*}

Similar to \eqref{Eq3}, one has that for $i<j$,
\begin{align*}
	\mbf E \Big[&\LL\nabla^2\varphi(\bar{X}^x_i),b(\bar{X}^x_{i})(\sigma(\bar{X}^x_i)\Delta W_i)^\top\RR_{HS}\cdot\LL\nabla^2\varphi(\bar{X}^x_j),b(\bar{X}^x_{j})(\sigma(\bar{X}^x_j)\Delta W_j)^\top\RR_{HS}\Big]=0.
\end{align*}
The above formula, combined with \eqref{Eq5}, \eqref{Eq4} and the H\"older inequality, yields
\begin{align*}
	\mbf E|\mcal R_{\tau,5}^2|^2&=\tau^3\sum_{k=0}^{m-1}\mbf E\LL\nabla^2\varphi(\bar{X}^x_k),b(\bar{X}^x_{k})(\sigma(\bar{X}^x_k)\Delta W_k)^\top\RR_{HS}^2\\
	&\le K\tau^3\sum_{k=0}^{m-1}\big(\mbf E|\nabla^2\varphi(\bar{X}^x_k)|^6\big)^{\frac13}\big(\mbf E|b(\bar{X}^x_k)|^6\big)^{\frac13}\big(\mbf E|\Delta W_k|^6\big)^{\frac13}\\
	&\le K(x)\tau^2.
\end{align*}
Thus, $\mbf  E|\mcal R_{\tau,5}|\le \mbf  E|\mcal R^1_{\tau,5}|+\big(\mbf  E|\mcal R^2_{\tau,5}|^2\big)^{\frac12}\le K(x)\tau^{\frac{1}{2}}$.
\end{proof}

\emph{Estimate of $\mcal R_{\tau,6}$.}
Plugging $\Delta \bar X^x_k=b(\bar X^x_{k+1})\tau+\sigma(\bar X^x_{k})\Delta W_k$ into \eqref{Rtau6} gives $\mcal R_{\tau,6}=\sum_{i=1}^4\mcal R_{\tau,6}^i$ with
\begin{align*}
\mcal R_{\tau,6}^1&:=-\frac{\tau^{\frac72}}{2}\sum_{k=0}^{m-1}\int_0^1(1-\theta)^2\nabla^3\varphi(\bar{X}^x_k+\theta\Delta \bar{X}^x_k)\big(b(\bar{X}^x_{k+1}),b(\bar{X}^x_{k+1}),b(\bar{X}^x_{k+1})\big)\ud \theta,\\
\mcal R_{\tau,6}^2&:=-\frac{3\tau^{\frac52}}{2}\sum_{k=0}^{m-1}\int_0^1(1-\theta)^2\nabla^3\varphi(\bar{X}^x_k+\theta\Delta \bar{X}^x_k)\big(b(\bar{X}^x_{k+1}),b(\bar{X}^x_{k+1}),\sigma(\bar{X}^x_k)\Delta W_k\big)\ud \theta,\\
R_{\tau,6}^3&:=-\frac{3\tau^{\frac32}}{2}\sum_{k=0}^{m-1}\int_0^1(1-\theta)^2\nabla^3\varphi(\bar{X}^x_k+\theta\Delta \bar{X}^x_k)\big(b(\bar{X}^x_{k+1}),\sigma(\bar{X}^x_k)\Delta W_k,\sigma(\bar{X}^x_k)\Delta W_k\big)\ud \theta,\\
R_{\tau,6}^4&:=-\frac{\tau^{\frac12}}{2}\sum_{k=0}^{m-1}\int_0^1(1-\theta)^2\nabla^3\varphi(\bar{X}^x_k+\theta\Delta \bar{X}^x_k)\big(\sigma(\bar{X}^x_k)\Delta W_k,\sigma(\bar{X}^x_k)\Delta W_k,\sigma(\bar{X}^x_k)\Delta W_k\big)\ud \theta.
\end{align*}
Similar to the derivation of \eqref{Eq4}, one can use \eqref{varphidiff}, \eqref{eq17} and Theorem \ref{pth-moment} to get that for any $p\ge1$ and $\tau<1$,
\begin{align}
	\mbf E\|\nabla^3\varphi(\bar{X}^x_k+\theta\Delta \bar{X}^x_k)\|^p_{\otimes}\le K(1+|x|^{2pq'q}),~\theta\in[0,1].\label{Eq6}
\end{align}
By \eqref{Eq5}, \eqref{Eq6} and the H\"older inequality, one has
\begin{align}
	\mbf E|\mcal R_{\tau,6}^1|\le K(x)\tau^{\frac32},\quad \mbf E|\mcal R_{\tau,6}^2|\le K(x)\tau,\quad \mbf E|\mcal R_{\tau,6}^3|\le K(x)\tau^{\frac12}.\label{Eq7}
\end{align}

Further, applying the Taylor expansion for $\nabla^3\varphi$,  we write $\mcal R_{\tau,6}^4=\mcal R_{\tau,6}^{4,1}+\mcal R_{\tau,6}^{4,2}$, where
\begin{align*}
	\mcal R_{\tau,6}^{4,1}&:=-\frac{\tau^{\frac12}}{6}\sum_{k=0}^{m-1}\nabla^3\varphi(\bar{X}^x_k)\big(\sigma(\bar{X}^x_k)\Delta W_k,\sigma(\bar{X}^x_k)\Delta W_k,\sigma(\bar{X}^x_k)\Delta W_k\big),\\
\mcal R_{\tau,6}^{4,2}&:=-\frac{\tau^{\frac12}}{2}\sum_{k=0}^{m-1}\int_0^1\int_0^1\nabla^4\varphi(\bar{X}^x_k+r\theta\Delta \bar X^x_k)\big(\sigma(\bar{X}^x_k)\Delta W_k,\sigma(\bar{X}^x_k)\Delta W_k,\sigma(\bar{X}^x_k)\Delta W_k,\Delta \bar X^x_k\big)\ud r \theta(1-\theta)^2\ud \theta.	
\end{align*} 
Similar to the proof for \eqref{Eq6}, we have that for any $p\ge1$ and $\tau<1$,
\begin{align*}
	\sup_{k\ge0}\mbf E\|\nabla^4\varphi(\bar{X}^x_k+r\theta\Delta \bar{X}^x_k)\|^p_{\otimes}\le K(1+|x|^{3pq'q}),~r,\theta\in[0,1].
\end{align*}
This together with the H\"older inequality and \eqref{eq17} gives
\begin{align*}
	\mbf E|\mcal R_{\tau,6}^{4,2}|\le K\tau^{\frac12}m\sup_{k\ge0}\Big[\sup_{r,\theta\in[0,1]}\big(\mbf E\|\nabla^4\varphi(\bar{X}^x_k+r\theta\Delta \bar{X}^x_k)\|^3_{\otimes}\big)^{\frac13}\big(\mbf E|\Delta W_k|^9\big)^{\frac13}\big(\mbf E|\Delta \bar X_k^x|^3\big)^{\frac13}\Big]\le K(x)\tau^{\frac12}.
\end{align*} 
Since $\Delta W_j$ is $\mcal F_{t_j}$-independent, for any $i<j$,
\begin{align*}
&\;\mbf E\big[\nabla^3\varphi(\bar{X}^x_i)\big(\sigma(\bar{X}^x_i)\Delta W_i,\sigma(\bar{X}^x_i)\Delta W_i,\sigma(\bar{X}^x_i)\Delta W_i\big)\nabla^3\varphi(\bar{X}^x_j)\big(\sigma(\bar{X}^x_j)\Delta W_j,\sigma(\bar{X}^x_j)\Delta W_j,\sigma(\bar{X}^x_j)\Delta W_j\big)\big]	\\
=&\;\mbf E\big[\nabla^3\varphi(\bar{X}^x_i)\big(\sigma(\bar{X}^x_i)\Delta W_i,\sigma(\bar{X}^x_i)\Delta W_i,\sigma(\bar{X}^x_i)\Delta W_i\big)\mbf E_j\big[\nabla^3\varphi(\bar{X}^x_j)\big(\sigma(\bar{X}^x_j)\Delta W_j,\sigma(\bar{X}^x_j)\Delta W_j,\sigma(\bar{X}^x_j)\Delta W_j\big)\big]\big]\\
=&\;0,
\end{align*}
where we used  the property of conditional expectations and
$$\mbf E(\Delta W_j^{p_1}\Delta W_j^{p_2}\Delta W_j^{p_3})=0\quad\forall~ p_1,p_2,p_3\in\{1,2,\ldots,D\},$$
with $\Delta W_j^{r}$ being the $r$th component of $\Delta W_j$. In this way, we get
\begin{align*}
	\mbf E|\mcal R_{\tau,6}^{4,1}|^2=\frac{\tau}{36}\sum_{k=0}^{m-1}\mbf E\Big[\nabla^3\varphi(\bar{X}^x_k)\big(\sigma(\bar{X}^x_k)\Delta W_k,\sigma(\bar{X}^x_k)\Delta W_k,\sigma(\bar{X}^x_k)\Delta W_k\big)\Big]^2\le K(x)\tau^2,
\end{align*}
due to \eqref{Eq4}. Thus, it holds that $\mbf E|\mcal R_{\tau,6}^4|\le K(x)\tau^{\frac12}$ for $\tau<1$, which combined with \eqref{Eq7} yields $\mbf E|\mcal R_{\tau,6}|\le K(x)\tau^{\frac12}$.

Combining the above estimates for $\mcal R_{\tau,i}$, $i=1,\ldots,6$, we obtain $\lim\limits_{\tau\to0}\mbf E|\mcal R_\tau|=0$. This gives the desired conclusion. \hfill$\square$ 

\section{Numerical experiments}\label{Sec5}
In this section, we perform numerical experiments to verify our theoretical results. First, for a given test function $h$, we obtain the approximation of the ergodic limit $\pi(h)$ numerically by virtue of the fact $\lim\limits_{t\to\infty}\mbf E(h(X(t)))=\pi(h)$ (see \eqref{pif}). Here, $\lim\limits_{t\to\infty}\mbf E(h(X(t)))$ is simulated by the numerical solution $\{\bar{X}_n\}_{n\ge0}$ of the BEM method. More precisely, let the step-size $\tau$ be small enough, $N$ sufficiently large, and use the  Monte–Carlo method to simulate the expectation. Then we have
$\lim\limits_{t\to\infty}\mbf E(h(X(t)))\approx \frac{1}{M}\sum\limits_{i=1}^Mh(\bar X_N^i)$,  with $\{\bar X_N^i\}_{i=1}^M$ being $M$ samplings of $\bar X_N$. Second, we verify the CLT for $\Pi_{\tau,\alpha}$, $\alpha\in(1,2]$. Denote $Z_{\tau,\alpha}(h)=	\frac{1}{\tau^{\frac{\alpha-1}{2}}}\Big(\frac{1}{\tau^{-\alpha}}\sum\limits_{k=0}^{\tau^{-\alpha}-1}h(\bar{X}_k)-\pi(h)\Big).$ Then, the CLT shows that for any $f\in\mbf C_b(\mbb R^d)$, $\lim\limits_{\tau\to0}\mbf Ef(Z_{\tau,\alpha}(h))=\int_{\mbb R^d}f(x)\mcal N(0,\pi(|\sigma^\top\nabla\varphi|^2))(\ud x)$. We will numerically verify that $\mbf Ef(Z_{\tau,\alpha}(h))$ tends to some constant as $\tau$ decreases.

\textbf{Example 5.1.} Consider the following SODE with Lipschitz diffusion coefficient:
\begin{align*}
	\begin{cases}
		\ud X(t)=-(X^3(t)+8X(t))\ud t+\sin(X(t))\ud W(t),\\
		X(0)=x\in\mbb R.	
	\end{cases}
\end{align*}
It is not difficult to verify that the coefficients of the above equation satisfy Assumptions \ref{assum1}-\ref{assum3}. First, we numerically simulate  the ergodic limit $\pi(h)$ using the aforementioned method. The expectation is realized by $5000$ sample paths. Fig.\ \ref{F1} displays the evolution of $\mbf Eh(\bar X_n)$ w.r.t. $n$ starting from different initial values. It is observed that the ergodic limit are $1$ and $0$ for  $h=\sin(x)+1$ and $h=x^4$, respectively.

\begin{figure}[H]
	\centering
	\subfigure[$h=\sin(x)+1$]{\includegraphics[width=0.49\textwidth]{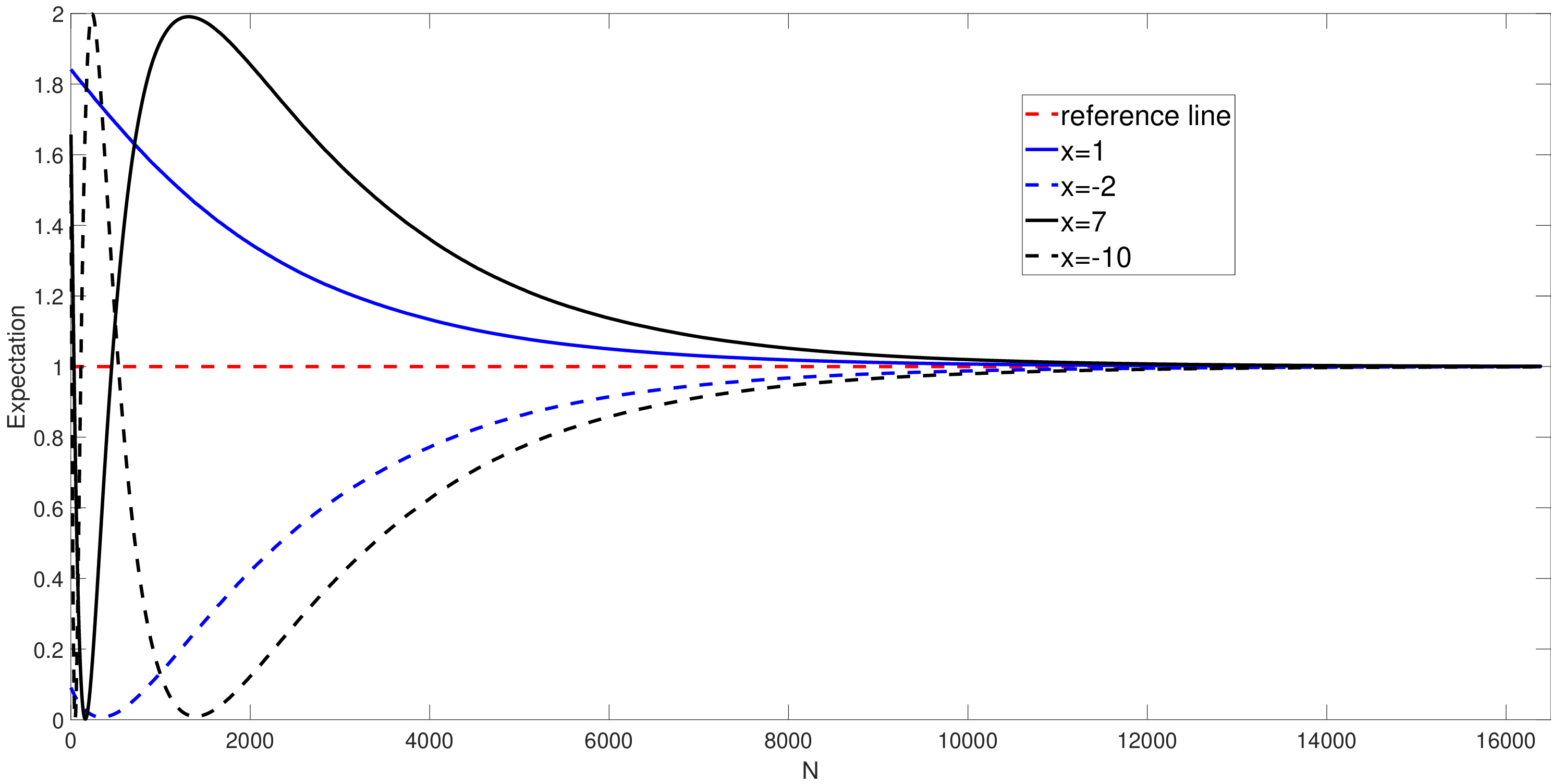}}
	\subfigure[$h=x^4$]{\includegraphics[width=0.49\textwidth]{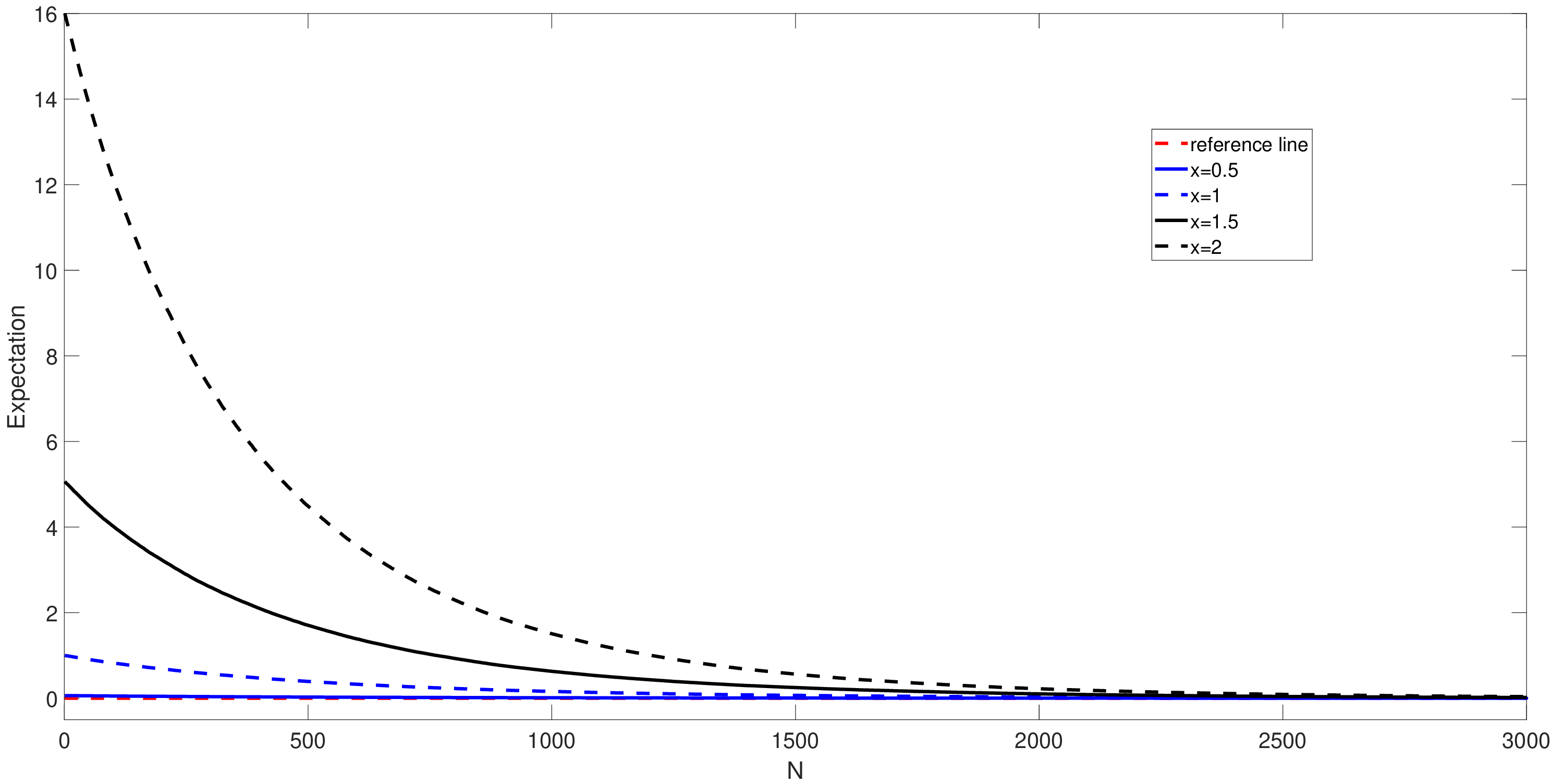}}
	\caption{The expectations of $h(\bar X_n)$ starting from different initial values with $\tau=2^{-14}$, $M=5000$.}\label{F1}
\end{figure}

Tables \ref{T1}-\ref{T4} show the evolution of $\mbf Ef(Z_{\tau,2}(h))$ w.r.t. $\tau$, where the initial value $x=1$ for Tables \ref{T1}-\ref{T2} while $x=-2$ for Tables \ref{T3}-\ref{T4}. It is observed that for all kinds of cases, $\mbf Ef(Z_{\tau,2}(h))$ will tend to some constant as $\tau$ decreases. We also find that the CLT of $\Pi_{\tau,2}$  also holds for  $h$ of super-linear growth. See also Section \ref{Sec6} for the discussion about this problem. 

\begin{table}[H]
	\centering
	\caption{The values of $\mbf Ef(Z_{\tau,2}(h))$ for different step-size with $x=1$, $M=5000$, $h(x)=\sin(x)+1.$}\label{T1}
		\vspace{2mm}
	\setlength{\tabcolsep}{0.5mm}{
		\begin{tabular}{| c| c| c| c| c| c|} 
			\hline
			$\tau$	& 0.05& 0.045 &0.04 & 0.035& 0.03\\ 
			\hline
			$f(x)=\cos(x)$
			& 0.9993475    & 0.9994458                    & 0.9995374   & 0.9996222                   & 0.9996928                 \\ 
			\hline
			$f(x)=\sin(x^6)$
			& 3.503257E-9   &2.157256E-9                     & 1.290072E-9 &   6.935839E-10                   &  3.801383E-10   \\ 
			\hline
			$\tau$	& 0.025&0.02&0.015&0.01&0.005  \\ 
			\hline
			$f(x)=\cos(x)$
			& 0.9997601                   & 0.9998199     & 0.999876                  & 0.9999221     & 0.9999635             \\ 
			\hline
			$f(x)=\sin(x^6)$
			&  1.787545E-10                & 7.860379E-11 &        2.516742E-11         & 6.635954E-12 &        7.200134E-13   \\ 
			\hline
	\end{tabular}}
\end{table}

\begin{table}[H]
	\centering
	\caption{The values of $\mbf Ef(Z_{\tau,2}(h))$ for different step-size with $x=1$, $M=5000$, $h(x)=x^4.$}\label{T2}
		\vspace{2mm}
	\setlength{\tabcolsep}{0.5mm}{
		\begin{tabular}{| c| c| c| c| c| c|} 
			\hline
		$\tau$	& 0.05& 0.045 &0.04 & 0.035& 0.03\\ 
			\hline
			$f(x)=\cos(x)$
		& 0.9998683    & 0.9998932                    & 0.9999163    & 0.9999356                    & 0.9999507                  \\ 
			\hline
	$f(x)=\sin(x^6)$
	&3.618809E-11    &2.394611E-11                     & 1.248624E-11 &   6.192004E-12                    &  3.409618E-12    \\ 
			\hline
			$\tau$	& 0.025&0.02&0.015&0.01&0.005  \\ 
			\hline
			$f(x)=\cos(x)$
			& 0.9999641                   & 0.9999746     & 0.9999842                   & 0.9999907     & 0.9999960             \\ 
			\hline
			$f(x)=\sin(x^6)$
			&  1.698598E-12                & 1.207866E-12 &        1.642608E-13         & 5.592859E-14 &        4.168847E-15   \\ 
			\hline
	\end{tabular}}
\end{table}

\begin{table}[H]
	\centering
	\caption{The values of $\mbf Ef(Z_{\tau,2}(h))$ for different step-size with $x=-2$, $M=5000$, $h(x)=\sin (x)+1.$}\label{T3}
		\vspace{2mm}
	\setlength{\tabcolsep}{0.5mm}{
		\begin{tabular}{| c| c| c| c| c| c|} 
			\hline
			$\tau$	& 0.05& 0.045 &0.04 & 0.035& 0.03\\ 
			\hline
			$f(x)=\cos(x)$
			& 0.998501    & 0.9987222                    & 0.9989255   & 0.9991153                    & 0.9992799                 \\ 
			\hline
			$f(x)=\sin(x^6)$
			&3.576542E-8    & 2.212666E-8                     & 1.329402E-8 &   7.365759E-9                    &  4.040548E-9    \\ 
			\hline
			$\tau$	& 0.025&0.02&0.015&0.01&0.005  \\ 
			\hline
			$f(x)=\cos(x)$
			& 0.999434                   & 0.9995741     & 0.9997039                  & 0.999814    & 0.9999128             \\ 
			\hline
			$f(x)=\sin(x^6)$
			&   1.944833E-9               & 8.400904E-10 &        2.824339E-10        & 7.212851E-11 &        7.596859E-12   \\ 
			\hline
	\end{tabular}}
\end{table}

\begin{table}[H]
	\centering
	\caption{The values of $\mbf Ef(Z_{\tau,2}(h))$ for different step-size with $x=-2$, $M=5000$, $h(x)=x^4.$}\label{T4}
		\vspace{2mm}
	\setlength{\tabcolsep}{0.5mm}{
		\begin{tabular}{| c| c| c| c| c| c|} 
			\hline
			$\tau$	& 0.05& 0.045 &0.04 & 0.035& 0.03\\ 
			\hline
			$f(x)=\cos(x)$
			& 0.9732673    & 0.9789388                    & 0.9838831 & 0.9879194                    & 0.9911355                \\ 
			\hline
			$f(x)=\sin(x^6)$
			&1.699370E-4   & 8.439149E-5                     & 3.866177E-5 &   1.664767E-5                    &  6.780718E-6    \\ 
			\hline
			$\tau$	& 0.025&0.02&0.015&0.01&0.005  \\ 
			\hline
			$f(x)=\cos(x)$
			& 0.9937917                   & 0.9958541    & 0.9974848                & 0.9986377    & 0.9994581             \\ 
			\hline
			$f(x)=\sin(x^6)$
			&   2.423985E-6               & 7.681994E-7 &        1.759620E-7        & 3.114619E-8 &        2.156873eE-9   \\ 
			\hline
	\end{tabular}}
\end{table}

\textbf{Example 5.2.} Consider the following SODE with non-Lipschitz diffusion coefficient:
\begin{align*}
	\begin{cases}
		\ud X(t)=-(X^3(t)+10X(t))\ud t+0.5X^2(t)\ud W(t),\\
		X(0)=x\in\mbb R.	
	\end{cases}
\end{align*}
Notice that the above equation satisfies Assumptions $2.1$-$2.4$ of \cite{Liuwei23}. Thus, 
$\{X(t)\}_{t\ge0}$ admits a unique invariant measure $\pi$.
Fig.\ \ref{F2} displays the evolution of $\mbf Eh(\bar X_n)$ w.r.t. $n$ starting from different initial values. In this case, the numerical ergodic limit is $0$. Table \ref{T5} reflects the evolution of $\mbf Ef(Z_{\tau,\alpha})$ as $\tau$ decreases. It is observed in Table \ref{T5} that $\mbf Ef(Z_{\tau,\alpha})$ will tend to $0$ for three different parameters $\alpha=1.2,1.5,2$. We remark that  the CLT may still hold for the BEM method of SODEs with non-Lipschitz diffusion coefficients, as is numerically shown in this example.

\begin{figure}[H]
	\centering
\includegraphics[width=0.9\textwidth]{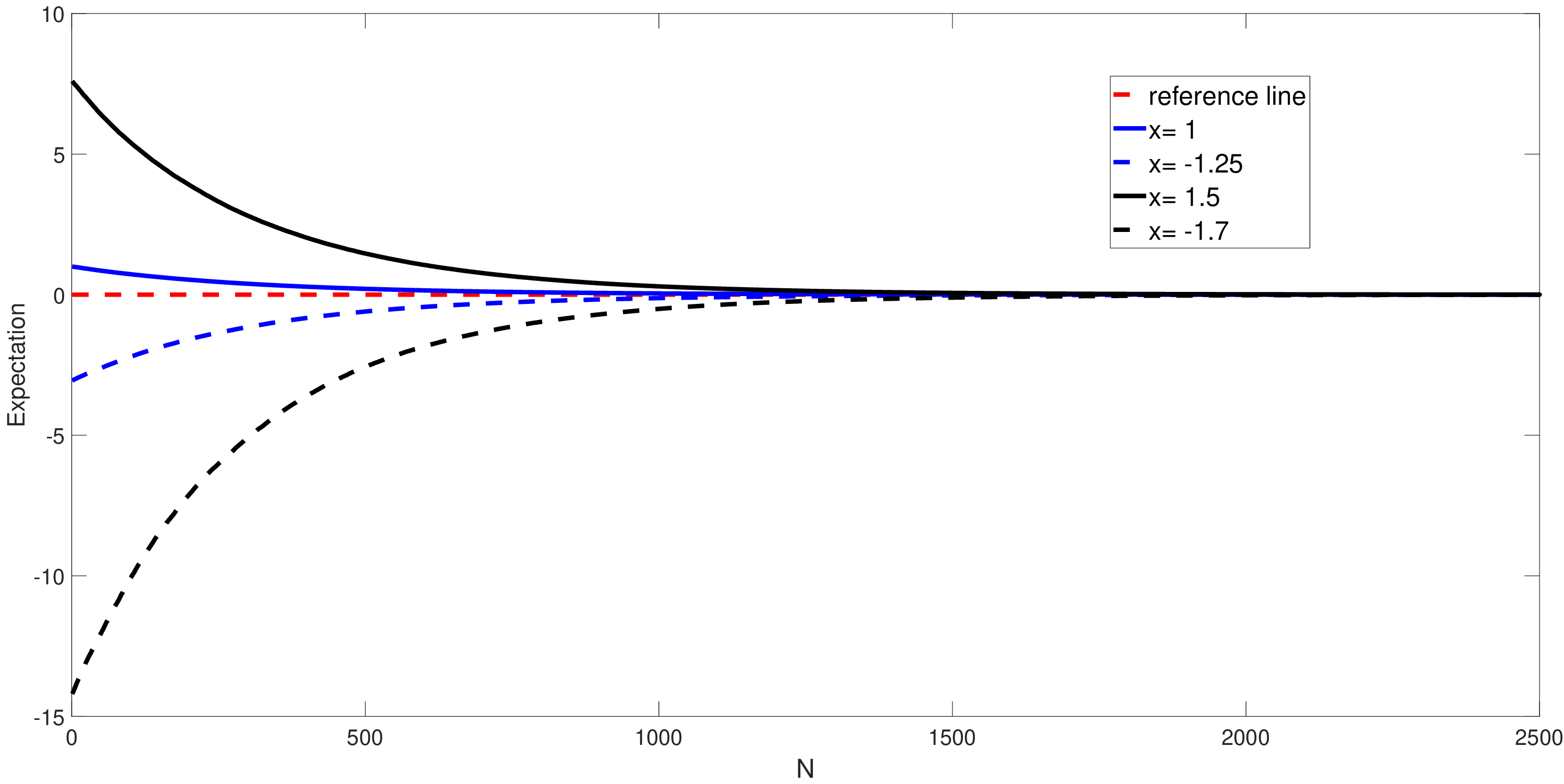}
	\caption{The expectations of $(\bar X_n)^5$ starting from different initial values with $\tau=2^{-14}$.}\label{F2}
\end{figure}

\begin{table}[H]
	\centering
	\caption{The values of $\mbf Ef(Z_{\tau,\alpha}(h))$ for different step-size with $x=0.5$, $M=5000$, $f(x)=\sin(x^6)$ and $h(x)=x^5.$}\label{T5}
	\vspace{2mm}
	\setlength{\tabcolsep}{0.5mm}{
		\begin{tabular}{| c| c| c| c| c| c|} 
			\hline
			$\tau$	& 0.01& 0.0095 &0.009 & 0.0085& 0.008\\ 
			\hline
			$\alpha=1.2$ 
			 &2.207120E-20 &2.008384E-20 &1.783228E-20 &1.614639E-20 &1.442169E-20          \\ 
			\hline
			$\alpha=1.5$
			 &3.482317E-22 &3.052800E-22 &2.552166E-22 &2.182177E-22 &1.865925E-22               \\ 
			\hline
			$\alpha=2$ 
			&3.482317E-25 &2.832017E-25 &2.203657E-25 &1.69844E-25 &1.339561E-25           \\ 
			\hline
			$\tau$	& 0.0075&0.007&0.0065&0.006&0.0055  \\ 
			\hline
			$\alpha=1.2$ 
			&1.32067E-20 &1.126548E-20 &1.009739E-20 &8.871488E-21 &7.774097E-21           \\ 
			\hline
			$\alpha=1.5$
			 &1.533266E-22 &1.296991E-22 &1.053313E-22 &8.859776E-23 &7.185399E-23           \\ 
			\hline
			$\alpha=2$
			&1.002341E-25 &7.552484E-26 &5.575008E-26 &4.003481E-26 &2.907394E-26          \\ 
			\hline
	\end{tabular}}
\end{table}

\section{Conclusions and future}\label{Sec6}
In this work, we prove the CLT for the temporal average of the BEM method, which characterizes the asymptotics of the BEM method in distribution.
The drift coefficients of underlying SODEs are allowed to grow super-linearly.  Different proof strategies are used for different deviation orders, which relies on the relationship  between the deviation order and optimal strong order of the BEM method.

In fact, it is possible to weaken the conditions of Theorems \ref{CLT1}-\ref{CLT2}, and we refer to  the following two aspects.
\begin{itemize}
	\item \textit{Conditions on $h$}. By revisiting the whole proof of Theorem \ref{CLT2}, it is observed that the requirement for the test function $h$ can be lowered. If we let $\nabla ^ih\in Poly(q'',\mbb R^d)$, $i=0,1,\ldots,4$ instead of $h\in \mbf C^4_b(\mbb R^d)$, then the main difference lies in the regularity of $\varphi$. In fact, it holds that $\nabla ^i\varphi \in Poly(L_0,\mbb R^d)$, $i=0,1,\ldots,4$ for some integer $L_0$ dependent on $q',q''$.  And this will make no difference to the conclusions of Lemmas \ref{Htaulimit} and \ref{Rtaulimit}, in view of Theorem \ref{pth-moment}. Thus, the CLT still holds for $\Pi_{\tau,2}(h)$ for a class of unbounded $h$. Similarly, Theorem \ref{CLT1} also holds for $\nabla^i h\in Poly(q'',\mbb R^d)$, $i=0,1,\ldots,4$. The above facts
	are also observed in the numerical experiments in Section \ref{Sec5}.
	
\item \textit{Conditions on $\sigma$}. Assume that $\sigma$ is unbounded but globally Lipschitz.  Let Assumption \ref{assum2} hold with $c_1>\frac{15}{2}L_1^2$ replaced by $c_1$ being sufficiently large. We can follow the same  argument in Theorem \ref{pth-moment} to give the $p$th moment boundedness for the BEM method. Roughly speaking, in this case, \eqref{Eq11} still holds. Similar to \eqref{eq2}, we obtain
\begin{align*}
	(1+pc_1\tau)|\bar{X}^x_{n+1}|^{2p}\le \big(|\bar{X}^x_{n}|^2+2\LL\bar{X}^x_{n},\sigma(\bar{X}^x_{n})\Delta W_n \RR+K(\tau+|\Delta W_n|^2)+K|\bar X^x_n|^2|\Delta W_n|^2\big)^p
\end{align*}
due to the linear growth of $\sigma$. By the similar analysis for \eqref{Eq12}, one can show that
\begin{align*}
\mbf E|\bar{X}^x_{n+1}|^{2p}\le \frac{(1+A(p,D)\tau)}{(1+pc_1\tau)}\mbf E|\bar X^x_n|^{2p}+K(p)\frac{(1+|x|^{2p-2})\tau}{(1+pc_1\tau)}
\end{align*}
for some $A(p,D)>0$ dependent on $p$ and $D$. Using the condition that $c_1$ is sufficiently large, one finally can obtain $\sup\limits_{n\ge0}\mbf E|\bar{X}^x_n|^r\le K(1+|x|^r)$ for some $r$  large enough. Thus, other conclusions still hold on basis of the moment boundedness of $\{\bar{X}_n\}_{n\ge0}$. Finally, one can establish the CLT for $\Pi_{\tau,\alpha}(h)$ when   $\sigma$ is Lipschitz, provided that the dissipation parameter $c_1$ is sufficiently large. When $\sigma$ is Lipschitz or of super-linear growth, it is interesting to study how to prove the $p$th ($p>2$) moment boundedness of the BEM method in the infinite time horizon for a relatively small $c_1$. We will study this problem in the future.
\end{itemize}

\bibliographystyle{plain}
\bibliography{mybibfile}

\end{document}